\numberwithin{equation}{section}
\pgfplotsset{compat=newest}
\title{Inverse Problems of Trapped Objects}
\author{Peter Elbau$^1$\\{\footnotesize\href{mailto:peter.elbau@univie.ac.at}{peter.elbau@univie.ac.at}}
\and Monika Ritsch-Marte$^2$\\{\footnotesize\href{mailto:monika.ritsch-marte@i-med.ac.at}{monika.ritsch-marte@i-med.ac.at}}
\and Otmar Scherzer$^{1,3}$\\{\footnotesize\href{mailto:otmar.scherzer@univie.ac.at}{otmar.scherzer@univie.ac.at}}
\and Denise Schmutz$^1$\\{\footnotesize\href{mailto:denise.schmutz@univie.ac.at}{denise.schmutz@univie.ac.at}}}
\date{}
\newtheorem{lemma}{Lemma}[section]
\newaliascnt{proposition}{lemma}
\newtheorem{proposition}[proposition]{Proposition}
\newaliascnt{corollary}{lemma}
\newaliascnt{theorem}{lemma}
\newaliascnt{definition}{lemma}
\newtheorem{definition}[definition]{Definition}
\newaliascnt{assumption}{lemma}
\newtheorem{assumption}[assumption]{Assumption}
\newtheorem*{example}{Example}
\theoremstyle{nonumberplain}
\newtheorem{remark}{Remark}
\newtheorem{proof}{Proof}
\titleformat{\section}[block]{\large\sc\filcenter}{\thesection.}{0.5ex}{}[]
\titleformat{\subsection}[runin]{\bf}{\thesubsection.}{0.5ex}{}[.]
\newcommand{\R}{\mathds{R}}
\newcommand{\C}{\mathds{C}}
\let\RE\Re
\let\Re=\undefined
\DeclareMathOperator{\Re}{\RE e}
\let\IM\Im
\let\Im=\undefined
\DeclareMathOperator{\Im}{\IM m}
\newcommand{\e}{\mathrm e}
\let\ii\i
\renewcommand{\i}{\mathrm i}
\renewcommand{\d}{\,\mathrm d}
\newcommand{\nspace}[1]{C_{c,+}(\R^{#1};\R)}
\newcommand{\Rspace}{C^4(\R;SO(3))}
\newcommand{\aspace}{C(\R;\R^3)}
\newcommand{\omspace}{C^3(\R;\R^3)}
\newcommand{\dk}[2]{{\bf D}^{#1} [ #2 ]}
\newcommand{\lsem}{[\![}
\newcommand{\lsemb}{\Big[\!\!\Big[}
\newcommand{\rsem}{]\!]}
\newcommand{\rsemb}{\Big]\!\!\Big]}
\begin{document}

%%%%%%%%%%%%%%%%%%%%%%%%%%%%%%
%%% Titlepage
%%%%%%%%%%%%%%%%%%%%%%%%%%%%%%
\maketitle
\thispagestyle{empty}
%% ADAPT
%\hspace*{0.5em}

\begin{center}
\parbox[t]{18em}{\footnotesize
\hspace*{-1ex}$^1$Faculty of Mathematics\\
University of Vienna\\
Oskar-Morgenstern-Platz 1\\
A-1090 Vienna, Austria}
\hfil
\parbox[t]{18em}{\footnotesize
	\hspace*{-1ex}$^2$Division for Biomedical Physics \\
	Medical University of Innsbruck \\
	Müllerstraße 44\\
	A-6020 Innsbruck, Austria}
\\[1ex]
\parbox[t]{18em}{\footnotesize
\hspace*{-1ex}$^3$Johann Radon Institute for Computational\\
\hspace*{1em}and Applied Mathematics (RICAM)\\
Altenbergerstraße 69\\
A-4040 Linz, Austria} 
\hfil
\parbox[t]{18em}{\mbox{}}
\end{center}
%%% End

% Introduction
\begin{abstract}
 Optical and acoustical trapping has been established as a tool for holding and moving microscopic particles suspended in a 
 liquid in a contact-free and non-invasive manner. Opposed to standard microscopic imaging where the probe is fixated, this 
 technique allows imaging in a more natural environment. This paper provides a method for estimating the movement of a 
 transparent particle which is maneuvered by tweezers, assuming that the inner structure of the probe is not subject to local 
 movements. The mathematical formulation of the motion estimation shows some similarities to Cryo-EM single particle imaging, 
 where the recording orientations of the probe need to be estimated. 
\end{abstract}

\section{Introduction}\label{section_int}
In the past decades optical trapping has established itself as a versatile, and easy-to-implement tool for holding and moving 
microscopic ($\mu$m-sized) particles suspended in a liquid in a contact-free and non-invasive manner~\cite{JonMarVol15}.
The optical forces needed to balance the weight scale with the particle volume. Thus, for increasing particle size heating of the sample 
due to absorption of the trapping light leads to a problem, except for very transparent specimens. To hold heavier samples (for example 
cell-clusters, entire micro-organisms, or so-called ``organoids'', mini-organs grown in the petri-dish) other trapping modalities, 
such as acoustic forces, have to be added as auxiliary trapping forces~\cite{ThaSteMeiHilBer11}.

In this paper, we study the problem of estimation the movement of trapped particles, as a preprocessing step for 3D tomographic imaging. 
Conveniently, the required tomographic projection images from different view-points can be taken by rotating the trapped specimen 
directly by means of the acoustic or optical trapping forces. 
Here we will assume to be in a setting for which the light propagation can be sufficiently well modelled by straight rays which only undergo 
spatially varying attenuation, as in X-ray tomography \cite{Nat01}. In this case the optical image resembles a projection image. This is, for instance, 
fulfilled in low numerical aperture imaging of biological samples with sufficient amplitude contrast. 

In contrast to classical computerized tomography, however, the motion of the particle in the trap is irregular: Since the particle is not 
completely immobilized in the trap and since the locally acting forces are typically inhomogeneous during an entire revolution, it undergoes 
a complicated motion described by an affine transformation with time-dependent - and unknown - parameters, the momentary translation and 
angular velocity vectors. The focus of this work is to provide, as a first step of the reconstruction, a recovery of the motion of the 
particle without knowledge of the inner structure. As soon as the motion is determined, the attenuation projection data can be properly 
aligned to give the 
three-dimensional X-ray-transform, see, for example, \cite{Hel99}, (at least for those directions which were attained during the motion) 
and from this, a standard reconstruction formula could be applied to recover the inner structure of the object.

A main ingredient for our reconstruction of the motion is the common line method known for the determination of the relative orientations of different 
projection images as it is used, for example, in cryogenic electron microscopy~\cite{Hee87,Gon88a}. An infinitesimal version of this technique, 
allows us (after correcting for a potential translation by tracking the center of the images, as explained in \autoref{subsection_translation}) to 
find at every time step the local angular velocity of the motion, see \autoref{subsection_rotation}. Moreover, we provide a numerical test by presenting reconstructions of simulated data (in \autoref{sec:numerics}).

%Model
\section{Mathematical model}

We consider a bounded object in $\R^3$, characterised by an attenuation coefficient $u \in \nspace{3}$.
\begin{definition}
In this paper
\begin{equation*} 
 \nspace{3}=\{u\in C(\R^3;\R)\mid\mathrm{supp}(u)\ne\emptyset\;\text{is compact},\;u\ge0\}
\end{equation*}
denotes the space of \emph{admissible attenuation coefficients}. For $u \in \nspace{3}$ we define its \emph{center}
\begin{equation}\label{eq:center}
\mathcal C_3 := \frac{1}{\int_{\R^3}u(x)\d x} \int\limits_{\R^3}x u(x)\d x \in \R^3 \text{ with } x =\left(\begin{smallmatrix}
x_1\\x_2\\x_3	\end{smallmatrix}\right).
\end{equation} 
\end{definition}
In this paper we make the assumption that the object does not undergo a deformation during its motion, 
that is: 
\begin{assumption}
The attenuation coefficient $u$ is exposed to a \emph{rigid motion} consisting of a rotation $R\in C(\R;SO(3))$ and a translation $T\in C(\R;\R^3)$,
\begin{equation} \label{eq:affine}
A(t,x) = \mathcal C_3+R(t)(x-\mathcal C_3+T(t)).
\end{equation}
During such an induced motion, the object is illuminated from the $e_3$-direction with a \emph{uniform intensity}, see \autoref{fig:setup}. 
\end{assumption}

\begin{figure}
\begin{center}
		\includegraphics[scale=0.5]{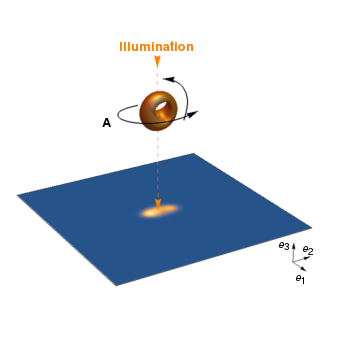}
\end{center}
	\caption{Illumination in the $e_3$-direction of the contour plot of the attenuation coefficient defined in \autoref{eq:defnumericsattcoeff} for $u(x)=42$ and the resulting attenuation projection image.}\label{fig:setup}
\end{figure}
Assuming in a rough approximation that the light moves along straight lines and only suffers from attenuation, we consider measurements in the form of \emph{attenuation projection mappings}.
\begin{definition}
	The \emph{attenuation mapping} $\mathcal J$ maps a rigid body motion (described by translation $t \mapsto T(t)$ and rotation $t \mapsto R(t)$)
	applied to an object of interest (described by the attenuation function $u:\R^3 \to \R$) onto the \emph{attenuation projection image}. That is the 
	\emph{attenuation mapping} of $u$ at time $t$ is given by 
	\begin{equation} \label{eq:J}
	(T,R) \mapsto \mathcal J[T,R](t,x_1,x_2)=\int_{-\infty}^\infty u\big(\mathcal C_3+R(t)(x-\mathcal C_3+T(t))\big)\d x_3.
	\end{equation}
\end{definition}

We will do the reconstruction of the motion in Fourier space, using the convention
\[\mathcal F_n[f](k) = (2\pi)^{-\frac n2}\int_{\R^n}f(x)\e^{-\i\left<k,x\right>}\d x \]
for the $n$-dimensional Fourier transform. 
For deriving the adequate formulas we need to introduce some notation first:
\begin{definition}
 The mapping 
 \begin{equation*}
 \begin{aligned}
  P: \R^3 &\to \R^2\\
  x &\mapsto \begin{pmatrix}x_1\\x_2\end{pmatrix}
  \end{aligned}
 \end{equation*}
 denotes the orthogonal projection onto the $x_1x_2-$plane. Moreover, the adjoint of $P$, $P^T : \R^2 \to \R^3$ is given by 
 $P^T k = (\begin{smallmatrix}k\\0\end{smallmatrix})$. 
\end{definition}
With this notation the attenuation mapping $\mathcal J$ satisfies \autoref{eqSetMapFourier}, below, which is an identity similar to the Fourier projection-slice theorem, see for example \cite{Bra56} and \cite[p.11]{Nat01}, that is used quite heavily for 
orientation reconstruction in Cryo-EM and $X$-Ray tomography.
\begin{lemma}\label{FSTforPM}
 Let $u\in \nspace{3}$ and $\mathcal J[R,T]$ be the attenuation mapping of a rigid body motion $(R,T)$, which the object of interest $u$
 gets exposed to. Then, the following identity holds:
 \begin{equation}\label{eqSetMapFourier}
  \mathcal F_2 [\mathcal J[T,R]]= \sqrt{2\pi}\,\mathcal F_3[u](R(t)P^Tk)\,\e^{\i\left<R(t)P^Tk,\mathcal C_3\right>}\e^{\i\left<k,P(T(t)-\mathcal C_3)\right>}.
 \end{equation}
 Here $\mathcal F_2$ denotes the two-dimensional Fourier transform with respect to the coordinates $(x_1,x_2)$. 
\end{lemma}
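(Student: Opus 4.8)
The identity \autoref{eqSetMapFourier} is a rigid-motion version of the Fourier projection--slice theorem, and the plan is to prove it along the standard route: combine the line integral over $x_3$ in \autoref{eq:J} with the planar Fourier integral defining $\mathcal F_2$ into a single integral over $\R^3$, and then remove the rotation $R(t)$ by the orthogonal substitution induced by the motion $A(t,\cdot)$ from \autoref{eq:affine}.

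Concretely, I would proceed as follows. First, fixing $t$ and writing $x=(x_1,x_2,x_3)^T$, note that the planar phase $\inner{k}{Px}$ equals $\inner{P^Tk}{x}$, so that
\begin{equation*}
 \mathcal F_2[\mathcal J[T,R]](t,k)=(2\pi)^{-1}\int_{\R^2}\int_{-\infty}^{\infty}u\big(\mathcal C_3+R(t)(x-\mathcal C_3+T(t))\big)\,\e^{-\i\inner{P^Tk}{x}}\d x_3\d x_1\d x_2.
\end{equation*}
Since $u\in\nspace{3}$ is continuous with compact support, the integrand is bounded and compactly supported in $\R^3$, so Fubini's theorem collapses the iterated integral into a single integral over $\R^3$. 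Next, substitute $y=\mathcal C_3+R(t)(x-\mathcal C_3+T(t))$; because $R(t)\in SO(3)$ we have $\det R(t)=1$, hence $\d x=\d y$, and $x=\mathcal C_3-T(t)+R(t)^{T}(y-\mathcal C_3)$. Using the adjoint relations $\inner{P^Tk}{R(t)^{T}v}=\inner{R(t)P^Tk}{v}$ and $\inner{P^Tk}{v}=\inner{k}{Pv}$, the exponent splits as
\begin{equation*}
 \inner{P^Tk}{x}=\inner{R(t)P^Tk}{y}-\inner{R(t)P^Tk}{\mathcal C_3}-\inner{k}{P(T(t)-\mathcal C_3)}.
\end{equation*}
Pulling the two $y$-independent factors $\e^{\i\inner{R(t)P^Tk}{\mathcal C_3}}$ and $\e^{\i\inner{k}{P(T(t)-\mathcal C_3)}}$ out of the integral leaves $\int_{\R^3}u(y)\e^{-\i\inner{R(t)P^Tk}{y}}\d y=(2\pi)^{\frac32}\mathcal F_3[u](R(t)P^Tk)$, and the prefactor collapses to $(2\pi)^{-1}(2\pi)^{\frac32}=\sqrt{2\pi}$, which is exactly \autoref{eqSetMapFourier}.

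I do not anticipate a substantive obstacle here: the statement is in essence a change-of-variables computation. The only points that deserve a line of justification are the application of Fubini (immediate from the continuity and compact support of $u$, which also guarantees that $\mathcal J[T,R](t,\cdot)$ is itself compactly supported, so that $\mathcal F_2$ is well defined pointwise in $k$), the use of $\det R(t)=1$ for the unimodular change of variables, and the careful bookkeeping needed to distinguish the $\R^2$- and $\R^3$-inner products and to keep the sign of the translation phase correct.
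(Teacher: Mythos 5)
Your proposal is correct and follows essentially the same route as the paper's proof: collapse the iterated integral into a single integral over $\R^3$, substitute $y=\mathcal C_3+R(t)(x-\mathcal C_3+T(t))$, and split the phase using the adjoint relations for $P$ and the orthogonality of $R(t)$. The only difference is that you spell out the Fubini and Jacobian justifications, which the paper leaves implicit.
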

\begin{proof}
	We denote by 
	\begin{equation}\label{eq:hatj}
	\mathcal{\hat{J}} := \mathcal F_2 [\mathcal J[T,R]]
	\end{equation}
	the Fourier transform of the attenuation mapping of $u$ exposed to a rigid transformation. 
	By definition of $\mathcal{\hat{J}}$ and $\mathcal J$ it holds that
		\begin{align*}
		\mathcal{\hat{J}}&= \frac1{2\pi}\int_{\R^2}\mathcal J[T,R](t,x_1,x_2)\e^{-\i k_1x_1-\i k_2x_2}\d(x_1,x_2) \\
		&= \frac1{2\pi}\int_{\R^3}u\big(\mathcal C_3+R(t)(x-\mathcal C_3+T(t))\big)\e^{-\i\left<P^Tk,x\right>}\d x.
		\end{align*}
		Substituting $y=\mathcal C_3+R(t)(x-\mathcal C_3+T(t))$, we obtain the asserted relation
		\begin{align*}
		\mathcal{\hat{J}}&= \frac1{2\pi}\int_{\R^3}u(y)\e^{-\i\left<P^Tk,R(t)^T(y-\mathcal C_3)-(T(t)-\mathcal C_3)\right>}\d y \\
		&= \sqrt{2\pi}\,\mathcal F_3[u](R(t)P^Tk)\,\e^{\i\left<R(t)P^Tk,\mathcal C_3\right>}\e^{\i\left<k,P(T(t)-\mathcal C_3)\right>}.
		\end{align*}
\end{proof}
This means that the Fourier transform of the two-dimensional attenuation mapping $\mathcal J$ of a rigid motion $(T,R)$ of an object of interest $u$ coincides, up to a factor depending on the motion $A$, with the values of the 
three-dimensional Fourier transform of $u$ evaluated on the central slice spanned by the first two columns of $R(t)$. 

% Method
\section{Motion Estimation} \label{section_method}%method/alogorithm

In this section we present a method for reconstruction of the time dependent translation $t \mapsto T(t)$ and the rotation parameters of $t \mapsto R(t)$, 
respectively, from collected data of $\mathcal{\hat{J}}$, as defined in \autoref{eq:hatj}. 
For this purpose we proceed iteratively:
\begin{enumerate}
 \item In a first step, explained in \autoref{subsection_translation}, we exploit the centers of the attenuation mappings 
   in order to find the translations.
 \item We then define in \autoref{eqRecMapReduced} reduced attenuation mappings which do not depend on the translation anymore. 
 \item In \autoref{subsection_rotation} we use \autoref{CLPforRPM} in order to find the remaining rotational part.
\end{enumerate}
Instead of recovering the rotation matrix function $t \mapsto R(t)$ directly, we rather retrieve the corresponding angular velocity function 
$t \mapsto \omega(t)$, which is defined as follows:
\begin{definition}
	Let $R\in C^1(\R;SO(3))$. Then the \emph{angular velocity} $\omega\in C(\R;\R^3)$ corresponding to $R$ is defined via
	\begin{equation}\label{eqDefOmega}
	 R(t)^T R'(t)x = \omega(t)\times x\quad\text{for all}\quad x\in\R^3.
	\end{equation} 
	Moreover, we represent $\omega$ in cylindrical coordinates with cylindrical axis $e_3$ and denote by $\alpha\in C(\R;\R^+)$ the \emph{cylindrical radius} of $\omega$, and by 
	$v\in C(\R;\mathbb{S}^1)$ the \emph{cylindrical components} of $\omega$ and $\varphi\in C(\R;\R)$ the \emph{cylindrical angle} of $\omega$, 
	such that
	\begin{equation}\label{eqDefO_v}
	\omega(t)=\begin{pmatrix}
	\alpha(t) v(t)\\ \omega_3(t)
	\end{pmatrix}=\begin{pmatrix}
	\alpha(t) \cos(\varphi(t))\\	\alpha(t) \sin(\varphi(t))\\ \omega_3(t)
	\end{pmatrix}.
	\end{equation}
	Moreover, we set $v^\perp(t)=(-v_2(t),v_1(t))^T$.
\end{definition}

The general workflow is depicted in \autoref{fig:workflow}.
\begin{figure}[h]
		\begin{center}
\begin{tikzpicture}[auto,font=\footnotesize,node distance=2.75cm,
base/.style = {rectangle, rounded corners, draw=black,
	minimum width=2cm, minimum height=1cm,
	text centered, font=\sffamily},
calculate/.style ={base,fill=teal!50!lightgray},
recover/.style ={base,fill=green!25!lightgray},
result/.style ={base,fill=gray!20},
every node/.style={fill=white, font=\sffamily}, align=center]
% Specification of nodes (position, etc.)
\node (CalcCOM) [calculate] {Calculate\\Centers of AM};
\node (Reca) [recover, right of=CalcCOM] {Recover\\ Translation $T$};
\node (CalcRPM) [calculate, right of=Reca] {Calculate\\ RAM};
\node (Recvw3) [recover, right of=CalcRPM] {Recover\\ $v$ and $\omega_3$};
\node (Recal) [recover, right of=Recvw3] {Recover\\ $\alpha$};
\node (CalcR) [calculate, right of=Recal] {Calculate\\ Rotations $R$};
% Specification of lines between nodes specified above
\draw[-Triangle] (CalcCOM) -- node[name=n1] {}  (Reca);
\draw[-Triangle] (Reca) -- (CalcRPM);
\draw[-Triangle] (CalcRPM) -- node[name=n2] {} (Recvw3);
\draw[-Triangle] (Recvw3) -- node[name=n3] {} (Recal);
\draw[-Triangle] (Recal) -- (CalcR);
% Nodes in second row
\node (FST) [result,below of=n1] {\autoref{prop:COM}};
\node (CLE) [result,below of=Recvw3] {\autoref{CLPforRPM}};
% More arrows
\draw[->,>=stealth] (FST) -- (n1);
\draw[->,>=stealth] (CLE) -- node[below] {first\\ derivative} (n2);
\draw[->,>=stealth] (CLE) -- node[below] {third\\ derivative} (n3); 
\end{tikzpicture}
\end{center}
	\caption{Workflow. AM=Attenuation Mappings, RAM=Reduced Attenuation Mappings.}
	\label{fig:workflow}
\end{figure}
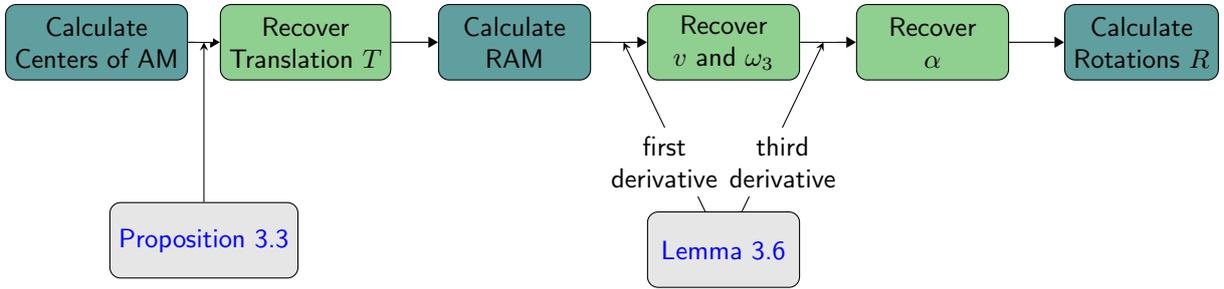

\subsection{Reconstruction of the Translation}\label{subsection_translation}
Since the attenuation operator integrates along the $e_3$ direction, we are facing a translational invariance of the 
attenuation projection data in this direction. This makes it impossible to uniquely reconstruct the third component of the 
translation $t \mapsto T(t)$ from the attenuation projection images:
\begin{lemma}\label{prop_translationinvariance}
	Let $u\in \nspace{3}$ and $\mathcal J$ be the attenuation mapping of a rigid motion of $u$. Then,
	\begin{equation*} 
	  \mathcal J[T,R] = \mathcal J[T+\rho e_3,R] \quad 
	\text{ for every } T\in C(\R;\R), \rho\in C(\R;\R) \text{ and } R\in C(\R;SO(3)).\end{equation*}
\end{lemma}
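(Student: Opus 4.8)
The plan is to read the identity directly off the defining integral \autoref{eq:J} by a single change of variables in the integration over $x_3$. I would fix a time $t$ and coordinates $x_1,x_2$, write $x=(x_1,x_2,x_3)^T$, and expand
\[
\mathcal J[T+\rho e_3,R](t,x_1,x_2)=\int_{-\infty}^{\infty}u\big(\mathcal C_3+R(t)(x-\mathcal C_3+T(t)+\rho(t)e_3)\big)\d x_3.
\]
The crucial observation is that $\rho(t)e_3$ is added to the argument \emph{before} $R(t)$ is applied, and that $\rho(t)$ does not depend on the integration variable $x_3$; hence substituting $x_3\mapsto x_3-\rho(t)$ is simply a translation of $\R$, which preserves the one-dimensional Lebesgue measure. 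Since $x+\rho(t)e_3=(x_1,x_2,x_3+\rho(t))^T$, this substitution absorbs the extra term and turns the integrand into $u\big(\mathcal C_3+R(t)(x-\mathcal C_3+T(t))\big)$, so the integral becomes exactly $\mathcal J[T,R](t,x_1,x_2)$. As $t,x_1,x_2$ are arbitrary, the two attenuation mappings coincide.

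For the (minor) analytic bookkeeping I would note that for $u\in\nspace{3}$ the integrand $x_3\mapsto u(\mathcal C_3+R(t)(x-\mathcal C_3+T(t)+\rho(t)e_3))$ is continuous with compact support, so the line integral is well defined and its translation invariance applies with no further hypotheses. An alternative route, which avoids the substitution altogether, is to argue in Fourier space via \autoref{FSTforPM}: on the right-hand side of \autoref{eqSetMapFourier} the translation enters only through the factor $\e^{\i\left<k,P(T(t)-\mathcal C_3)\right>}$, and since $Pe_3=0$ one has $P(T(t)+\rho(t)e_3-\mathcal C_3)=P(T(t)-\mathcal C_3)$; hence $\mathcal F_2[\mathcal J[T+\rho e_3,R]]=\mathcal F_2[\mathcal J[T,R]]$, and injectivity of the two-dimensional Fourier transform gives the claim. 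I would use the direct change-of-variables argument as the main proof and possibly mention the Fourier version as a side remark.

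I do not expect a genuine obstacle here: the statement is just the quantitative form of the fact that the projection direction $e_3$ lies in the kernel of $P$, so any translation component along $e_3$ is washed out by the integration. The only thing one has to be careful about is \emph{which} quantity gets shifted in the substitution --- the scalar variable $x_3$, not the rotated vector $R(t)x$ --- which is precisely what keeps the computation a one-liner.
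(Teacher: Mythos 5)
Your proof is correct and follows essentially the same route as the paper: the substitution $\tilde x_3=x_3+\rho(t)$ in the line integral, exploiting that $\rho(t)e_3$ is added before $R(t)$ acts and is independent of the integration variable. The Fourier-space alternative via \autoref{FSTforPM} and $Pe_3=0$ is a valid side remark but not needed.
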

\begin{proof}
	Substituting $\tilde x_3=x_3+\rho(t)$, we find that
	\begin{align*}
	\mathcal J[T+\rho e_3,R](t,x_1,x_2) &= \int_{-\infty}^\infty u\big(\mathcal C_3+R(t)(x+\rho(t)e_3-\mathcal C_3+T(t))\big)\d x_3 \\
	&= \int_{-\infty}^\infty u\left(\mathcal C_3+R(t)\left(\left(\begin{smallmatrix}x_1\\x_2\\\tilde x_3\end{smallmatrix}\right)-\mathcal C_3+T(t)\right)\right)\d\tilde x_3 = \mathcal J[T,R](t,x_1,x_2).
	\end{align*}
\end{proof}

According to \autoref{prop_translationinvariance} it is impossible to reconstruct the third component of the translation $T$ of object of interest 
(that is the attenuation function), 
and thus we aim to recover only $P(T)$. 
It turns out that the orthogonal projection of the translated center of $u$ is equal to the center of the corresponding attenuation
mapping $\mathcal C_2$, which can be directly computed from the projection data. 
\begin{proposition}\label{prop:COM}
 Let $u\in \nspace{3}$ and $\mathcal J$ be the attenuation mapping of a rigid motion an object of interest $u$ is exposed to. 
 Moreover, let $\mathcal{C}_3$ be the center of $u$ defined in \autoref{eq:center} and $\mathcal{C}_2$ the \emph{two-dimensional 
 center} of the attenuation mapping $\mathcal{J}$,
 \begin{equation} \label{eq:c2}
  \mathcal{C}_2:=\frac{1}{\int_{\R^2} \mathcal J[T,R](t,x)\d x} 
  \int\limits_{\R^2}\left(\begin{smallmatrix} x_1\\x_2 \end{smallmatrix}\right)\mathcal J[T,R](t,x)\d x.
 \end{equation}	
 Then,
 \begin{equation}\label{eqRecCenter}
  P(\mathcal C_3-T(t)) = \mathcal C_2\quad\text{for every}\quad T\in \aspace, R\in C(\R;SO(3)),t\in\R.
 \end{equation}
\end{proposition}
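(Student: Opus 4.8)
The plan is to reduce the two-dimensional center $\mathcal{C}_2$ to the three-dimensional center $\mathcal{C}_3$ by direct computation, using only the definition \eqref{eq:J} of the attenuation mapping $\mathcal{J}[T,R]$ and the change-of-variables trick already employed in the proof of \autoref{FSTforPM}. The key observation is that both the numerator and the denominator in \eqref{eq:c2} are integrals of $\mathcal{J}[T,R](t,x)$ against polynomials in $(x_1,x_2)$ of degree at most one, and that after substituting $y = \mathcal{C}_3 + R(t)(x - \mathcal{C}_3 + T(t))$ these become three-dimensional integrals of $u(y)$ against (affine functions of) $y$. Since $R(t) \in SO(3)$, the Jacobian of this substitution is $1$, so no extra normalisation appears.

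First I would treat the denominator: substituting $y = \mathcal{C}_3 + R(t)(x - \mathcal{C}_3 + T(t))$ in $\int_{\R^2}\mathcal{J}[T,R](t,x)\d x = \int_{\R^3} u(\mathcal{C}_3 + R(t)(x-\mathcal{C}_3+T(t)))\d x$ gives exactly $\int_{\R^3} u(y)\d y$, the normalising constant of $\mathcal{C}_3$. Next I would treat the numerator. Writing $P^T(\begin{smallmatrix}x_1\\x_2\end{smallmatrix}) = x - \begin{smallmatrix}0\\0\\x_3\end{smallmatrix}$, I would express $\int_{\R^2}P^T\! \left(\begin{smallmatrix}x_1\\x_2\end{smallmatrix}\right)\mathcal{J}[T,R](t,x)\d x$ as a three-dimensional integral; under the same substitution, $x = \mathcal{C}_3 - T(t) + R(t)^T(y - \mathcal{C}_3)$, so the integrand becomes $u(y)$ times an affine function of $y$. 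Using $\int_{\R^3}(y - \mathcal{C}_3)u(y)\d y = 0$ (the defining property of the center) kills the $R(t)^T(y-\mathcal{C}_3)$ term, leaving $(\mathcal{C}_3 - T(t))\int_{\R^3}u(y)\d y$ for that part. The remaining subtlety is the $\begin{smallmatrix}0\\0\\x_3\end{smallmatrix}$ piece, i.e. the $e_3$-component; but this only matters for the third coordinate, which is discarded after applying $P$, so I would either restrict attention from the outset to $P x = \left(\begin{smallmatrix}x_1\\x_2\end{smallmatrix}\right)$ or note that applying $P$ to the final identity annihilates it. Dividing numerator by denominator then yields $\mathcal{C}_2 = P(\mathcal{C}_3 - T(t))$.

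The main obstacle — really the only place requiring care — is bookkeeping the projection $P$ correctly: the integrand $\left(\begin{smallmatrix}x_1\\x_2\end{smallmatrix}\right)\mathcal{J}$ lives in $\R^2$, so to substitute I must lift it to $\R^3$ via $P^T$, perform the computation there, and then recognise that the $e_3$-component of the result is irrelevant because the center $\mathcal{C}_2$ is $\R^2$-valued to begin with. Concretely, the cleanest route is to compute $\int_{\R^2}\left(\begin{smallmatrix}x_1\\x_2\end{smallmatrix}\right)\mathcal{J}[T,R](t,x)\d x = P\int_{\R^3} x\, u(\mathcal{C}_3+R(t)(x-\mathcal{C}_3+T(t)))\d x$, then substitute inside the $\R^3$-integral and use $P R(t)^T (y - \mathcal{C}_3)$ integrating to zero against $u$. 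I should also remark, as the proof of \autoref{prop_translationinvariance} shows, that the integrals defining $\mathcal{C}_2$ converge since $u \in \nspace{3}$ has compact support, so $\mathcal{J}[T,R](t,\cdot)$ is compactly supported and continuous, making all the integrals above finite and the interchanges legitimate.
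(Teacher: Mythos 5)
Your proposal is correct and follows essentially the same route as the paper's proof: compute the total mass $\int_{\R^2}\mathcal J\,\d x=\int_{\R^3}u(y)\,\d y$ and the first moment $\int_{\R^2}\left(\begin{smallmatrix}x_1\\x_2\end{smallmatrix}\right)\mathcal J\,\d x=P\int_{\R^3}x\,u(\mathcal C_3+R(t)(x-\mathcal C_3+T(t)))\,\d x$ via the substitution $y=\mathcal C_3+R(t)(x-\mathcal C_3+T(t))$, and then use $\int_{\R^3}(y-\mathcal C_3)u(y)\,\d y=0$ to eliminate the rotational term. The extra bookkeeping you describe around the $e_3$-component is indeed a non-issue, exactly as you conclude.
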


\begin{proof}
	To calculate the center of $\mathcal J[T,R](t)$, we first remark that by substituting $y=\mathcal C_3+R(t)(x-\mathcal C_3+T(t))$
	it follows that
	\begin{equation}\label{eqRecTotalAtt}
	\int_{\R^2}\mathcal J[T,R](t,x_1,x_2)\d(x_1,x_2) = \int_{\R^3}u\big(\mathcal C_3+R(t)(x-\mathcal C_3+T(t))\big)\d x = \int_{\R^3}u(y)\d y.
	\end{equation}
	
	Moreover, we find
	\begin{equation*} \int_{\R^2}\begin{pmatrix}x_1\\x_2\end{pmatrix}\mathcal J[T,R](t,x_1,x_2)\d(x_1,x_2) = \int_{\R^3}Px\,u\big(\mathcal C_3+R(t)(x-\mathcal C_3+T(t))\big)\d x. \end{equation*}
	Substituting again $y=\mathcal C_3+R(t)(x-\mathcal C_3+T(t))$, we get that
	\begin{align*}
	\int_{\R^2}\begin{pmatrix}x_1\\x_2\end{pmatrix}&\mathcal J[T,R](t,x_1,x_2)\d(x_1,x_2) = \int_{\R^3}P\big(R(t)^T(y-\mathcal C_3)+\mathcal C_3-T(t)\big)\,u(y)\d y \\
	&= PR(t)^T\left(\int_{\R^3}yu(y)\d y-\mathcal C_3\int_{\R^3}u(y)\d y\right)+P(\mathcal C_3-T(t))\int_{\R^3}u(y)\d y
	\end{align*}
	Since the first term vanishes according to the definition of $\mathcal C_3$, we get with \autoref{eqRecTotalAtt} the identity
	\begin{equation*} \int_{\R^2}\begin{pmatrix}x_1\\x_2\end{pmatrix}\mathcal J[T,R](t,x_1,x_2)\d(x_1,x_2) = P(\mathcal C_3-T(t))\int_{\R^2}\mathcal J[T,R](t,x_1,x_2)\d(x_1,x_2), \end{equation*}
	which is just \autoref{eqRecCenter}.
\end{proof}
\begin{remark}
 From \autoref{eqRecCenter} we can reconstruct the translation part of $T$ in the $x_1x_2$-plane. According to \autoref{prop_translationinvariance} this 
 is the most we can hope for.
\end{remark}

\subsection{Reconstruction of the Rotation}\label{subsection_rotation}
In a second step we aim at recovering the cylindrical parameters $t\mapsto v(t)$ ($t \mapsto \varphi(t)$, respectively), $t\mapsto \alpha(t)$ and $t\mapsto\omega_3(t)$ corresponding to the rotation function $t \mapsto R(t)$ from the reduced attenuation mapping $\tilde{\mathcal J}$, 
which is the Fourier transform of $\mathcal{J}$ after translation correction with \autoref{eqRecCenter}:
\begin{definition} 
 We define the \emph{reduced attenuation map} corresponding to $u$ as
 \begin{equation}\label{eqRecMapReduced}
  \begin{aligned}
   \tilde{\mathcal J}: \R \times \R^2 &\to \R,\\
   (t,k) &\mapsto \mathcal F_2 [\mathcal J[T,R]](t,k)\,\e^{\i\left<k,\mathcal C_2\right>}.
  \end{aligned}
 \end{equation}
\end{definition}

\begin{lemma} 
 The reduced attenuation mapping is independent of the translation $t \mapsto T(t)$ and only dependent on the rotation $t \mapsto R(t)$.
\end{lemma}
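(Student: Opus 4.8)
The plan is to substitute the closed form of $\mathcal F_2[\mathcal J[T,R]]$ supplied by \autoref{FSTforPM} into the definition \autoref{eqRecMapReduced} of $\tilde{\mathcal J}$ and then to observe that the only factor carrying a dependence on $T$ is the translational phase $\e^{\i\inner{k}{P(T(t)-\mathcal C_3)}}$, which is cancelled by the correction factor $\e^{\i\inner{k}{\mathcal C_2}}$ once \autoref{prop:COM} is invoked.

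Concretely, first I would insert \autoref{eqSetMapFourier} into \autoref{eqRecMapReduced} to get
\[\tilde{\mathcal J}(t,k) = \sqrt{2\pi}\,\mathcal F_3[u](R(t)P^Tk)\,\e^{\i\inner{R(t)P^Tk}{\mathcal C_3}}\,\e^{\i\inner{k}{P(T(t)-\mathcal C_3)}}\,\e^{\i\inner{k}{\mathcal C_2}}.\]
Since $k$, $\mathcal C_2$ and $P(T(t)-\mathcal C_3)$ all lie in $\R^2$, bilinearity of the inner product lets me merge the last two exponentials into $\e^{\i\inner{k}{P(T(t)-\mathcal C_3)+\mathcal C_2}}$. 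Next I would recall from \autoref{eqRecCenter} that $\mathcal C_2 = P(\mathcal C_3 - T(t))$, so that $P(T(t)-\mathcal C_3)+\mathcal C_2 = 0$ and this exponential is identically $1$. What remains is
\[\tilde{\mathcal J}(t,k) = \sqrt{2\pi}\,\mathcal F_3[u](R(t)P^Tk)\,\e^{\i\inner{R(t)P^Tk}{\mathcal C_3}},\]
an expression in which $T$ no longer occurs; only $R(t)$ enters (together with the fixed object data $u$ and its center $\mathcal C_3$). This proves both assertions simultaneously.

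There is no genuinely hard step here; the one point deserving a little care is that $\mathcal C_2$, as defined in \autoref{eq:c2}, is itself a functional of $\mathcal J[T,R]$ and hence a priori depends on $T$ — it is precisely \autoref{prop:COM} that makes this dependence explicit and exhibits the cancellation with the leftover translational phase. It is also worth a quick sanity check that the sign conventions in \autoref{eqSetMapFourier} and \autoref{eqRecCenter} are mutually consistent, since the whole argument hinges on the two phase contributions being exact negatives of one another.
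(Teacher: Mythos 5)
Your proof is correct and follows exactly the paper's argument: multiply \autoref{eqSetMapFourier} by $\e^{\i\left<k,\mathcal C_2\right>}$, invoke \autoref{eqRecCenter} to cancel the translational phase, and read off that the remaining expression $\sqrt{2\pi}\,\mathcal F_3[u](R(t)P^Tk)\,\e^{\i\left<R(t)P^Tk,\mathcal C_3\right>}$ depends only on $R$. Your added remark that $\mathcal C_2$ is itself a $T$-dependent functional of the data, made explicit precisely by \autoref{prop:COM}, is a sensible clarification but does not change the argument.
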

\begin{proof}
 Multiplying \autoref{eqSetMapFourier} with $\e^{\i\left<k,\mathcal C_2\right>}$ and taking into account the relation between $\mathcal{C}_2$ and 
 $\mathcal{C}_3$ from \autoref{eqRecCenter} we get 
 \begin{equation*}
  \tilde{\mathcal J}(t,k) = \sqrt{2\pi}\,\mathcal F_3[u](R(t)P^Tk)\,\e^{\i\left<R(t)P^Tk,\mathcal C_3\right>},
 \end{equation*}
 where the right hand side is independent of the translation $T$.
\end{proof}
The reduced attenuation mapping possesses the following symmetry property.
\begin{lemma}\label{CLPforRPM}
 Let $u\in \nspace{3}$ and let $\tilde{\mathcal J}$ be the reduced attenuation mapping. 
 Then, for arbitrary $R\in C(\R;SO(3))$ the following identity holds
 \begin{equation}\label{eqRecSymmetry}
  \tilde{\mathcal J}\left(s,\frac\lambda{t-s}P(e_3\times(R(s)^TR(t)e_3))\right) = 
  \tilde{\mathcal J}\left(t,\frac\lambda{s-t}P(e_3\times(R(t)^TR(s)e_3))\right)
 \end{equation}
 for all $\lambda\in\R$ and $s,t\in\R$ with $s\ne t$.
\end{lemma}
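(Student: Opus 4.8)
The plan is to reduce the claimed identity to the elementary fact that both sides evaluate the three-dimensional Fourier transform $\mathcal F_3[u]$ (times a motion-dependent phase) at the *same* point of $\R^3$ — namely a point lying on the intersection of the two central slices $R(s)(\R^2\times\{0\})$ and $R(t)(\R^2\times\{0\})$. This is the infinitesimal/parametric version of the common-line idea: two distinct central planes through the origin in $\R^3$ meet in a line, and the data on that line must agree. Concretely, I would start from the formula for $\tilde{\mathcal J}$ proved just above the statement,
\begin{equation*}
 \tilde{\mathcal J}(t,k) = \sqrt{2\pi}\,\mathcal F_3[u](R(t)P^Tk)\,\e^{\i\left<R(t)P^Tk,\mathcal C_3\right>},
\end{equation*}
and observe that the right-hand side depends on $(t,k)$ only through the vector $R(t)P^Tk \in \R^3$. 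Hence it suffices to show that, with
\begin{equation*}
 k_s := \tfrac{\lambda}{t-s}\,P\!\left(e_3\times(R(s)^TR(t)e_3)\right), \qquad
 k_t := \tfrac{\lambda}{s-t}\,P\!\left(e_3\times(R(t)^TR(s)e_3)\right),
\end{equation*}
one has the equality of points in $\R^3$:
\begin{equation}\label{eq:common-point}
 R(s)P^Tk_s = R(t)P^Tk_t.
\end{equation}

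The key step is to identify $P^Tk_s$ and $P^Tk_t$ explicitly. I would use the fact that for any $w\in\R^3$ the vector $P^TP(e_3\times w)$ equals $e_3\times w$ itself, because $e_3\times w$ has vanishing third component; thus $P^Tk_s = \tfrac{\lambda}{t-s}\,e_3\times(R(s)^TR(t)e_3)$ and similarly for $k_t$. Now apply $R(s)$ to the first: using that a rotation $Q\in SO(3)$ satisfies $Q(a\times b) = (Qa)\times(Qb)$, we get
\begin{equation*}
 R(s)P^Tk_s = \tfrac{\lambda}{t-s}\,\big(R(s)e_3\big)\times\big(R(t)e_3\big),
\end{equation*}
and applying $R(t)$ to the second, $R(t)P^Tk_t = \tfrac{\lambda}{s-t}\,\big(R(t)e_3\big)\times\big(R(s)e_3\big)$. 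Since $a\times b = -\,b\times a$ and $\tfrac{1}{s-t} = -\tfrac{1}{t-s}$, the two expressions coincide, which is exactly \eqref{eq:common-point}. The phase factors $\e^{\i\left<R(\cdot)P^Tk_\cdot,\mathcal C_3\right>}$ then match automatically because their arguments are the equal vectors from \eqref{eq:common-point}. Finally I would note the identity holds for all $\lambda\in\R$ and all $s\ne t$ simply because the computation above never used anything beyond $s\ne t$ (needed only so the prefactors $\tfrac{\lambda}{t-s}$ make sense); the case $R(s)e_3 = R(t)e_3$ is not excluded — then both sides evaluate $\tilde{\mathcal J}$ at $k=0$ and trivially agree.

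The only genuine obstacle is bookkeeping: getting the sign of the scalar prefactor right, and remembering that $P^TP$ acts as the identity precisely on vectors orthogonal to $e_3$ (which cross products with $e_3$ always are), so that no information is lost when passing between $\R^2$ and $\R^3$. Once those two observations are in place, the proof is a two-line application of $SO(3)$-equivariance of the cross product together with the closed form of $\tilde{\mathcal J}$. I would therefore present it as: (i) recall the closed form of $\tilde{\mathcal J}$; (ii) reduce to \eqref{eq:common-point}; (iii) compute both sides of \eqref{eq:common-point} via $Q(a\times b)=(Qa)\times(Qb)$ and antisymmetry of $\times$; (iv) conclude.
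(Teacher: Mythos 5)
Your proposal is correct and follows essentially the same route as the paper: both reduce the identity to the equality of the three-dimensional evaluation points via the closed form $\tilde{\mathcal J}(t,k)=\sqrt{2\pi}\,\mathcal F_3[u](R(t)P^Tk)\,\e^{\i\left<R(t)P^Tk,\mathcal C_3\right>}$, using $R(t)P^TP(e_3\times(R(t)^TR(s)e_3))=(R(t)e_3)\times(R(s)e_3)$ and the antisymmetry of the cross product against the sign flip in $\tfrac{\lambda}{s-t}$. Your explicit remark that $P^TP$ acts as the identity on vectors orthogonal to $e_3$ is exactly the (implicit) justification the paper uses in its opening displayed identity.
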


\begin{proof}
 We remark that we have for arbitrary $s,t\in\R$ the relation
 \begin{equation*} 
  (R(t)e_3)\times(R(s)e_3) = R(t)(e_3\times(R(t)^TR(s)e_3)) = R(t)P^TP(e_3\times(R(t)^TR(s)e_3)). 
 \end{equation*}
 Choosing 
 $$k=\frac\lambda{s-t}P(e_3\times(R(t)^TR(s)e_3)) \quad \text{ for } s\ne t$$ with some arbitrary $\lambda\in\R$, 
 we find from \autoref{eqSetMapFourier} that
	\begin{equation} \label{eq:h}
	\tilde{\mathcal J}\left(t,\frac\lambda{s-t}P(e_3\times(R(t)^TR(s)e_3))\right)
	= \sqrt{2\pi}\mathcal F_3[u]\left(\frac\lambda{s-t}(R(t)e_3)\times(R(s)e_3)\right)\e^{\i\frac\lambda{s-t}\left<(R(t)e_3)\times(R(s)e_3),\mathcal C_3\right>}.
	\end{equation}
	Since by the definition of the cross product 
	\begin{equation*} 
	 (R(t)e_3)\times(R(s)e_3) = -(R(s)e_3)\times(R(t)e_3), 
	\end{equation*}
	the right hand side of \autoref{eq:h} is invariant with respect to interchanging $s$ and $t$, and we get \autoref{eqRecSymmetry}.
\end{proof}

The above result is the starting point for the recovery of the rotation matrix function $t \mapsto R(t)$ from measurement of the attenuation 
mapping $\mathcal{J}$ defined in \autoref{eq:J}. The basic idea is to differentiate \autoref{eqRecSymmetry} with respect to time up 
to order three. We see below that from the first order derivative of the equation it is possible to find the cylindrical component $v$ 
and the height $\omega_3$ of the angular velocity $\omega$ (see \autoref{propD1}). 

\subsubsection*{Reconstruction of the cylindrical component $v$ and the height $\omega_3$}
We show below that by differentiation of \autoref{eqRecSymmetry} with respect to $t$ we can recover the cylindrical components $v$ 
and $\omega_3$ of the angular velocity $\omega$ as defined in \autoref{eqDefO_v} and \autoref{eqDefOmega}, respectively.
In this section we use the tensor derivative notation as summarized in \autoref{sec:app}.

First we derive the derivative of \autoref{eqRecSymmetry}:
\begin{proposition} \label{propD1}
 Let $u\in \nspace{3}$ and $\tilde{\mathcal J}$ as defined in \autoref{eqRecMapReduced}. 
 Moreover, let $R\in C^2(\R;SO(3))$ and $\omega\in C^1(\R;\R^3)$ the associated angular velocity as defined in \autoref{eqDefOmega}. 
 Then, for all $t\in\R$ satisfying $\alpha(t) \ne 0$ and all $\mu \in \R$ the following relation holds:
 \begin{equation}\label{eqRecSymmetryFirstOrder}
  \partial_t\tilde{\mathcal J}(t,\mu v(t))=\omega_3(t)\dk{1}{\tilde{\mathcal J}}(t,\mu v(t))\lsem\mu v^\perp(t)\rsem.
 \end{equation}
\end{proposition}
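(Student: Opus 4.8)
The plan is to read the symmetry identity \autoref{eqRecSymmetry} as the statement that one scalar function of two time variables is invariant under interchanging them, to differentiate that invariance once, and to evaluate on the diagonal. Set
\[
 H(a,b,\lambda):=\tilde{\mathcal J}\!\left(a,\tfrac{\lambda}{b-a}P\bigl(e_3\times(R(a)^TR(b)e_3)\bigr)\right),\qquad a\ne b,
\]
so that \autoref{CLPforRPM} is precisely $H(s,t,\lambda)=H(t,s,\lambda)$; this invariance was in fact already noted in its proof via \autoref{eq:h}. The first thing to establish is that $H$ is $C^1$ up to the diagonal $\{a=b\}$. Since $R\in C^2(\R;SO(3))$, the $\R^2$-valued map $\rho(a,b):=P\bigl(e_3\times(R(a)^TR(b)e_3)\bigr)$ is $C^2$ and vanishes for $a=b$, so Hadamard's lemma gives $\tfrac1{b-a}\rho(a,b)=\int_0^1(\partial_2\rho)\bigl(a,a+\tau(b-a)\bigr)\d\tau$, which is $C^1$ even on $\{a=b\}$; and $\tilde{\mathcal J}$ is $C^2$ (indeed smooth in $k$, because $u\in\nspace3$ has compact support so that $\mathcal F_3[u]$ is entire, cf.\ the closed form for $\tilde{\mathcal J}$ obtained right after \autoref{eqRecMapReduced}, and $C^2$ in $t$ via $R$). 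Writing $\phi(a,b,\lambda)$ for the second argument of $\tilde{\mathcal J}$ inside $H$, the Hadamard form gives on the diagonal $\phi(t,t,\lambda)=\lambda(\partial_2\rho)(t,t)=\lambda\,P\bigl(e_3\times(\omega(t)\times e_3)\bigr)=\lambda\alpha(t)v(t)$, using $R(t)^TR'(t)e_3=\omega(t)\times e_3$ from \autoref{eqDefOmega}; since $\alpha(t)\ne0$, the value $\mu:=\lambda\alpha(t)$ runs over all of $\R$ as $\lambda$ does, which is exactly where the hypothesis $\alpha(t)\ne0$ enters.

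Next, differentiate $H(s,t,\lambda)=H(t,s,\lambda)$ with respect to $t$ and let $t\to s$; by the $C^1$ regularity above this yields $(\partial_2H)(s,s,\lambda)=(\partial_1H)(s,s,\lambda)$, with $\partial_1,\partial_2$ the partials of $H$ in its first and second time slots. Expanding via the chain rule in $H(a,b,\lambda)=\tilde{\mathcal J}\bigl(a,\phi(a,b,\lambda)\bigr)$, the derivative hitting the first slot of $\tilde{\mathcal J}$ occurs only in $\partial_1H$, while the two $\dk{1}{\tilde{\mathcal J}}$-terms combine; after cancellation,
\[
 \partial_t\tilde{\mathcal J}\bigl(s,\mu v(s)\bigr)=\dk{1}{\tilde{\mathcal J}}\bigl(s,\mu v(s)\bigr)\lsem(\partial_2\phi)(s,s,\lambda)-(\partial_1\phi)(s,s,\lambda)\rsem,
\]
so everything reduces to the antisymmetric part of the first derivatives of $\phi$ on the diagonal. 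The Hadamard representation of $\phi$ helps a second time: differentiating $\phi(a,b,\lambda)=\lambda\int_0^1(\partial_2\rho)\bigl(a,a+\tau(b-a)\bigr)\d\tau$ under the integral and putting $a=b=s$ gives $(\partial_2\phi)(s,s,\lambda)=\tfrac\lambda2(\partial_2^2\rho)(s,s)$ and $(\partial_1\phi)(s,s,\lambda)=\lambda(\partial_1\partial_2\rho)(s,s)+\tfrac\lambda2(\partial_2^2\rho)(s,s)$; the awkward second-order pieces $(\partial_2^2\rho)(s,s)$, which would carry $R''$, cancel, leaving $(\partial_2\phi)(s,s,\lambda)-(\partial_1\phi)(s,s,\lambda)=-\lambda(\partial_1\partial_2\rho)(s,s)$.

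Finally, evaluate the mixed derivative. From $\partial_2\rho(a,b)=P\bigl(e_3\times(R(a)^TR'(b)e_3)\bigr)$ one gets $(\partial_1\partial_2\rho)(s,s)=P\bigl(e_3\times(R'(s)^TR'(s)e_3)\bigr)$; writing $R'(s)=R(s)\Omega(s)$ with $\Omega(s)x=\omega(s)\times x$ skew-symmetric gives $R'(s)^TR'(s)=-\Omega(s)^2$, and $\Omega(s)^2e_3=\omega(s)\times(\omega(s)\times e_3)=\omega_3(s)\omega(s)-|\omega(s)|^2e_3$, so $e_3\times\bigl(-\Omega(s)^2e_3\bigr)=-\omega_3(s)\bigl(e_3\times\omega(s)\bigr)$ and $P\bigl(e_3\times\omega(s)\bigr)=(-\omega_2(s),\omega_1(s))^T=\alpha(s)v^\perp(s)$. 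Hence $(\partial_1\partial_2\rho)(s,s)=-\alpha(s)\omega_3(s)v^\perp(s)$, and plugging this back,
\[
 \partial_t\tilde{\mathcal J}\bigl(s,\mu v(s)\bigr)=\dk{1}{\tilde{\mathcal J}}\bigl(s,\mu v(s)\bigr)\lsem\lambda\alpha(s)\omega_3(s)v^\perp(s)\rsem=\omega_3(s)\,\dk{1}{\tilde{\mathcal J}}\bigl(s,\mu v(s)\bigr)\lsem\mu v^\perp(s)\rsem,
\]
which is \autoref{eqRecSymmetryFirstOrder} after renaming $s$ to $t$. I expect the main obstacle to be the handling of the singular diagonal $t=s$: one must argue — not merely via a pointwise limit — that the $\tfrac1{t-s}$ prefactor produces a genuinely $C^1$ object, so that differentiating and then substituting $t=s$ is legitimate, and one must arrange the Taylor/Hadamard expansion so that the $R''$-dependent contributions visibly drop out; the cross-product algebra afterwards is comparatively routine.
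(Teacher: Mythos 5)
Your proof is correct, and at its core it is the same argument as the paper's: differentiate the common-line symmetry of \autoref{CLPforRPM} once with respect to $t$, evaluate on the diagonal $t=s$, observe that the $\partial_t\tilde{\mathcal J}$-term appears on only one side, and identify the surviving first-order term of the argument as $\lambda\omega_3\alpha v^\perp$. The execution, however, differs in a worthwhile way. The paper feeds in the explicit Taylor coefficients $a_0=b_0=\alpha v$, $2a_1=\omega_3\alpha v^\perp+(\alpha v)'$, $2b_1=-\omega_3\alpha v^\perp+(\alpha v)'$ from \autoref{thTayPolynomial}; each of $a_1,b_1$ separately carries the $R''$-dependent piece $(\alpha v)'$, which cancels only upon forming $a_1-b_1$. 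You instead use Hadamard's lemma to exhibit $\tfrac1{b-a}\rho(a,b)$ as a genuinely $C^1$ function up to the diagonal and show structurally that the symmetric second-derivative contributions $(\partial_2^2\rho)(s,s)$ drop out of the antisymmetrization $\partial_2\phi-\partial_1\phi$, so that only the mixed partial $(\partial_1\partial_2\rho)(s,s)=P(e_3\times(R'(s)^TR'(s)e_3))=-\alpha\omega_3v^\perp$ has to be computed (and your cross-product algebra for this, via $R'^TR'=-\Omega^2$, is correct). This buys a cleaner justification of the differentiate-then-restrict-to-the-diagonal step, which the paper treats somewhat informally through $o(t-s)$ remainders, and it avoids computing any Taylor coefficient involving $R''$. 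What it does not buy is reusability: the paper's \autoref{thTayPolynomial} is developed to third order precisely because the same coefficients are needed again in the proof of \autoref{propD3}, whereas your antisymmetrization trick would have to be pushed considerably further to serve there.
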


\begin{proof}
 We insert in \autoref{eqRecSymmetry} the first order Taylor polynomials
 \begin{align*}
  \frac1{t-s}P(e_3\times(R(s)^TR(t)e_3)) &= a_0(s)+a_1(s)(t-s)+o(t-s)\quad\text{and} \\
  \frac1{s-t}P(e_3\times(R(t)^TR(s)e_3)) &= b_0(s)+b_1(s)(t-s)+o(t-s)
 \end{align*}
 with the coefficients $a_j$ and $b_j$, $j=0,1$, calculated in \autoref{thTayPolynomial}. 
 Then, by differentiating \autoref{eqRecSymmetry} with respect to $t$ at the position $t=s$, we 
 find for every $\lambda\in\R$ the relation
 \begin{equation*} 
  \dk{1}{\tilde{\mathcal J}}(s,\lambda a_0(s))\lsem\lambda a_1(s)\rsem = 
  \partial_t\tilde{\mathcal J}(s,\lambda b_0(s))+ \dk{1}{\tilde{\mathcal J}}(s,\lambda b_0(s))\lsem\lambda b_1(s)\rsem. 
 \end{equation*}
 Inserting the expressions from \autoref{eqTayCoefficients} for $a_0=\alpha v = b_0$ and $2a_1 = \omega_3 \alpha v^\bot +(\alpha v)'$, 
 $2b_1 = -\omega_3 \alpha v^\bot +(\alpha v)'$, we end up with the identity
 \begin{equation*} 
 \partial_t\tilde{\mathcal J}(s,\lambda\alpha(s)v(s)) = 
 \dk{1}{\tilde{\mathcal J}}(s,\lambda\alpha(s)v(s))\lsem\lambda\omega_3(s)\alpha(s)v^\perp(s)\rsem \quad \text{ for all }\lambda,s\in\R.
 \end{equation*}
 If we choose for an $s\in\R$ with $\alpha(s)\ne0$ the parameter $\lambda=\frac\mu{\alpha(s)}$, this gives us \autoref{eqRecSymmetryFirstOrder}.
\end{proof}

It is possible to recover for every time $t\in\R$ the cylindrical components $v(t)$ and the cylindrical height $\omega_3(t)$ using \autoref{propD1}. Since \autoref{eqRecSymmetryFirstOrder} holds for every $\mu \in \R$ it is sufficient look for a vector $v(t)\in \mathbb{S}^1$ such that the function
\begin{equation}\label{eq:constantfunctionomega3}
 \mu\mapsto\frac{\partial_t\tilde{\mathcal J}(t,\mu v(t))}{\dk{1}{\tilde{\mathcal J}}(t,\mu v(t))\lsem \mu v^\perp(t)\rsem}
\end{equation}
is constant for every $\mu\in\R$. This is in principle possible as we show in \autoref{sec:numerics}. The value of this constant function will then be $\omega_3(t)$. Of course we have to assume that there does not exist a vector $v\in \mathbb{S}^1$ such that $\partial_t[\tilde{\mathcal J}](t,\mu v)=0$ and 
$\dk{1}{\tilde{\mathcal J}}(t,\mu v)\lsem\mu v^\perp\rsem=0$ for all $\mu \in \R$. The question remains open under what conditions it is possible to determine $v$, and consequently $\omega_3$, uniquely? It turns out that we have to face a similar non-uniqueness issue as in Cryo-EM, where the reconstruction of the rotations is only possible up to an orthogonal transformation \cite{Lam08}. This follows from the fact that the object of interest, in our case described by the attenuation coefficient $u$, is unknown as well and a reflection of $u$ in the $x_1x_2$-plane through the origin leads to the same attenuation projection data. In fact, this ambiguity can directly be observed from \autoref{eqRecSymmetryFirstOrder}, which is obviously solved by $v(t)$ and $\check v(t)=-v(t)$. Denoting by $\omega(t)$ and $\check{\omega}(t)$ the corresponding angular velocities and by $R(t)$ and $\check R(t)$ the solutions to \autoref{eqDefOmega} with initial conditions $R(0)=\check R(0)=\mathds1$, we get the following connection
\[\check R(t)=\Sigma R(t) \Sigma,\]
where $\Sigma=\mathrm{diag}(1,1,-1)$.
When the object is moved with respect to $R$ and the reflected object with respect to $\check R$, we get exactly the same 
attenuation projection data (see \autoref{fig:reflection}).
\begin{figure}[h]
\begin{center}
		\begin{subfigure}[t]{0.4\textwidth}
		\centering
		\includegraphics[scale=0.4]{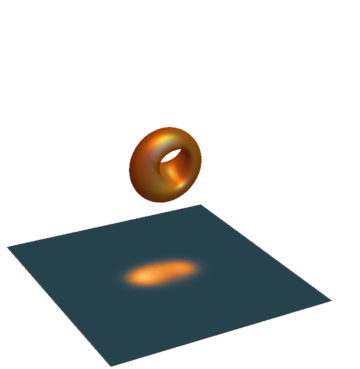}
		\caption{}
		\label{subfig:A}
	\end{subfigure}
	\begin{subfigure}[t]{0.4\textwidth}
		\centering
		\includegraphics[scale=0.4]{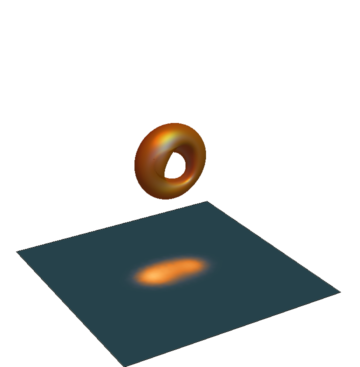}
		\caption{}
	\end{subfigure}
\vspace*{0.5cm}
	\begin{subfigure}[t]{0.4\textwidth}
		\centering
		\includegraphics[scale=0.4]{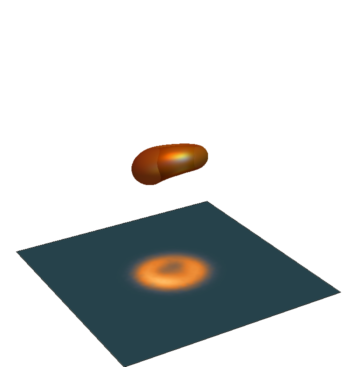}
		\caption{}
	\end{subfigure}
	\begin{subfigure}[t]{0.4\textwidth}
		\centering
		\includegraphics[scale=0.4]{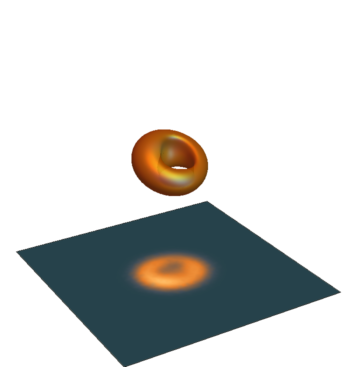}
		\caption{}
	\end{subfigure}
\end{center}
	\caption{(A) and (C): Object rotated with respect to $R$ at two different time steps.\\(B) and (D): Reflected object rotated with respect to $\check{R}$ at the same time steps.\\ The attenuation projection images are the same.}
	\label{fig:reflection}
\end{figure}

\subsubsection*{Reconstruction of the cylindrical radius $\alpha$}
So far we have explained how to recover the cylindrical parameters $t\mapsto v(t)$ (or equivalently $t \mapsto \varphi(t)$) and $t \mapsto \omega_3(t)$. 
For the full reconstruction of the motion will still need to estimate the cylindrical radius $t \mapsto \alpha(t)$, which can be done by using 
\autoref{propD3} below. Note that we go directly to the third derivative of \autoref{eqRecSymmetry}, as the second derivative provides no new information.

\begin{proposition}
	\label{propD3}
	Let $u\in \nspace{3}$, $\tilde{\mathcal J}$ be the reduced attenuation mapping of a rigid motion of $u$ in Fourier space. Let further $R\in\Rspace$, $t\in\R$ and $\omega\in\omspace$ be the angular velocity corresponding to $R$ and let $\sigma(t)=\varphi'(t)$.
		Then, for all $t\in\R$ such that $\alpha(t)\ne0$ and $\sigma(t)\ne-\omega_3(t)$, we have
	\begin{equation}
	\label{eq:RecSymmetryThirdOrder}
	A_0(\mu)+A_{02}(\mu)\alpha(t)^2+A_{1}(\mu)\mu\frac{\alpha'(t)}{\alpha(t)}=0\quad\text{for all}\quad \mu\in\R,
	\end{equation}
	where
	\begin{align*}
	A_0(\mu) &=\frac14\mu(\omega_3+\sigma)\Big[\mu^2\omega_3(\omega_3-\sigma)\dk{3}{\tilde{\mathcal J}}(s,\mu v)\lsem v^\perp,v^\perp,v^\perp\rsem \nonumber \\
	&\qquad +2\mu \dk{2}{\tilde{\mathcal J}}(s,\mu v)\lsem v^\perp,\omega_3\sigma v-\omega_3'v^\perp\rsem+2\dk{1}{\tilde{\mathcal J}}(s,\mu v)\lsem\omega_3^2v^\perp+\omega_3'v\rsem \\
	&\qquad -\mu(3\omega_3-\sigma)\partial_t\dk{2}{\tilde{\mathcal J}}(s,\mu v)\lsem v^\perp,v^\perp\rsem+2\partial_{tt}\dk{1}{\tilde{\mathcal J}}(s,\mu v)\lsem v^\perp\rsem\Big], \nonumber \\
	A_{02}(\mu) &= \frac12\mu(\omega_3+\sigma)\dk{1}{\tilde{\mathcal J}}(s,\mu v)\lsem v^\perp\rsem,\\
	A_1(\mu) &= \frac12(\omega_3+\sigma)\Big[\mu\omega_3\dk{2}{\tilde{\mathcal J}}(s,\mu v)\lsem v^\perp,v^\perp\rsem
	-\omega_3\dk{1}{\tilde{\mathcal J}}(s,\mu v)\lsem v\rsem-\partial_t\dk{1}{\tilde{\mathcal J}}(s,\mu v)\lsem v^\perp\rsem\Big].	
	\end{align*}
	Note that we omitted the dependence on time of the coefficients for the sake of readability.
\end{proposition}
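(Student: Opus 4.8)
Here is how I would attack \autoref{propD3}.

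The plan is to follow the proof of \autoref{propD1}, but to carry the Taylor expansions of the two arguments of $\tilde{\mathcal J}$ in \autoref{eqRecSymmetry} up to third order and then differentiate \autoref{eqRecSymmetry} three times in $t$ at $t=s$. Concretely, I would write, using the coefficients computed in \autoref{thTayPolynomial} (see also \autoref{eqTayCoefficients}),
\[
 \frac1{t-s}P(e_3\times(R(s)^TR(t)e_3)) = \sum_{j=0}^{3} a_j(s)\,(t-s)^j + o((t-s)^3),
\]
\[
 \frac1{s-t}P(e_3\times(R(t)^TR(s)e_3)) = \sum_{j=0}^{3} b_j(s)\,(t-s)^j + o((t-s)^3),
\]
where $a_0=b_0=\alpha v$, and, since $v'=\sigma v^\perp$ and $(v^\perp)'=-\sigma v$ by \autoref{eqDefO_v}, $2a_1=\alpha' v+\alpha(\omega_3+\sigma)v^\perp$ and $2b_1=\alpha' v+\alpha(\sigma-\omega_3)v^\perp$, while $a_2,b_2,a_3,b_3$ are the (longer) expressions in $\alpha$, $v$, $v^\perp$, $\omega_3$, $\sigma$ and their derivatives furnished by the same lemma. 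The regularity assumed on $R$ and $\omega$ is precisely what makes these third-order expansions legitimate and ensures the $o((t-s)^3)$ remainders do not contribute to the third $t$-derivative at $t=s$.

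Next I would apply the tensor-derivative chain rule of \autoref{sec:app}. On the left-hand side of \autoref{eqRecSymmetry} only the second slot of $\tilde{\mathcal J}$ depends on $t$, so its third $t$-derivative at $t=s$ equals
\[
 \dk{3}{\tilde{\mathcal J}}(s,\lambda a_0)\lsem\lambda a_1,\lambda a_1,\lambda a_1\rsem
 +6\,\dk{2}{\tilde{\mathcal J}}(s,\lambda a_0)\lsem\lambda a_1,\lambda a_2\rsem
 +6\,\dk{1}{\tilde{\mathcal J}}(s,\lambda a_0)\lsem\lambda a_3\rsem ,
\]
whereas on the right-hand side the first slot depends on $t$ as well, so by the mixed Fa\`a di Bruno formula one gets in addition the terms $\partial_t\dk{2}{\tilde{\mathcal J}}\lsem\cdot,\cdot\rsem$, $\partial_t\dk{1}{\tilde{\mathcal J}}\lsem\cdot\rsem$, $\partial_{tt}\dk{1}{\tilde{\mathcal J}}\lsem\cdot\rsem$, $\partial_{ttt}\tilde{\mathcal J}$, and, multiplied by lower Taylor coefficients of $b$, also $\partial_t\tilde{\mathcal J}$ and $\partial_{tt}\tilde{\mathcal J}$. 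Equating the two sides (with $\lambda a_0=\lambda b_0$, i.e. the common evaluation point $\lambda\alpha v$) and subtracting, I would then invoke \autoref{eqRecSymmetryFirstOrder} of \autoref{propD1} together with its first two $t$-derivatives to express every pure time-derivative of $\tilde{\mathcal J}$ — and, more generally, every term carrying more $t$-differentiations than are allowed in $A_0,A_{02},A_1$ — through the restricted vocabulary $\dk{1}{\tilde{\mathcal J}}$, $\dk{2}{\tilde{\mathcal J}}$, $\dk{3}{\tilde{\mathcal J}}$, $\partial_t\dk{1}{\tilde{\mathcal J}}$, $\partial_t\dk{2}{\tilde{\mathcal J}}$ and $\partial_{tt}\dk{1}{\tilde{\mathcal J}}$ appearing there. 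This is also why the second derivative of \autoref{eqRecSymmetry} carries no new information: it collapses to the first $t$-derivative of \autoref{eqRecSymmetryFirstOrder}, whence one passes directly to third order.

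It then remains to substitute the explicit $a_j,b_j$ and to collect terms. Because the $v^\perp$-components of $a_1$ and $b_1$ are $\tfrac12\alpha(\omega_3+\sigma)$ and $\tfrac12\alpha(\sigma-\omega_3)$ (and the odd-parity pieces of $a_3,b_3$ behave similarly), the surviving expression factors through $\omega_3+\sigma$, which produces the common prefactors $\tfrac14\mu(\omega_3+\sigma)$, $\tfrac12\mu(\omega_3+\sigma)$ and $\tfrac12(\omega_3+\sigma)$ in $A_0$, $A_{02}$ and $A_1$. Rescaling $\lambda=\mu/\alpha(s)$, so that every evaluation point $\lambda\alpha v$ becomes $\mu v$ and the remaining explicit factors reorganise into powers of $\alpha(s)$ and the single quotient $\alpha'(s)/\alpha(s)$, I would finally group the part independent of $\alpha(s)$, the part multiplying $\alpha(s)^2$ and the part multiplying $\mu\,\alpha'(s)/\alpha(s)$; this should reproduce \autoref{eq:RecSymmetryThirdOrder} with exactly the stated $A_0(\mu)$, $A_{02}(\mu)$, $A_1(\mu)$. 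The hypotheses enter here: $\alpha(s)\ne0$ is needed for the rescaling, and $\sigma(s)\ne-\omega_3(s)$ guarantees that the common prefactor does not vanish, so that \autoref{eq:RecSymmetryThirdOrder} is a genuine constraint rather than the trivial $0=0$ obtained when $\sigma=-\omega_3$.

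The step I expect to be the main obstacle is purely the bookkeeping: a complete third-order multivariate Fa\`a di Bruno expansion of both sides of \autoref{eqRecSymmetry}, insertion of the moderately long expressions for $a_2,a_3,b_2,b_3$, and the sizeable cancellation that must leave precisely the three listed terms — individual term types (e.g. the coefficient of $\dk{3}{\tilde{\mathcal J}}\lsem v^\perp,v^\perp,v^\perp\rsem$) do not match up until the contributions from $\dk{2}{\tilde{\mathcal J}}\lsem\cdot,\cdot\rsem$, the mixed time-derivative terms and the \autoref{propD1}-substitutions are all combined. Keeping this manageable hinges on using \autoref{propD1} systematically to discard lower-order contributions and on carefully exploiting the symmetry of the multilinear arguments of $\dk{2}{\tilde{\mathcal J}}$ and $\dk{3}{\tilde{\mathcal J}}$ (so that, e.g., terms with arguments $\lsem v^\perp,v^\perp,v\rsem$ in any order are consolidated) rather than expanding everything blindly.
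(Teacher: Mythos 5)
Your overall strategy coincides with the paper's: expand both arguments of \autoref{eqRecSymmetry} to third order via \autoref{thTayPolynomial}, take three $t$-derivatives at $t=s$ with the tensor chain rule of \autoref{sec:app} (only the spatial slot moves on the left, both slots on the right), use \autoref{propD1} and its time derivatives to eliminate $\partial_{tt}\tilde{\mathcal J}$ and $\partial_{ttt}\tilde{\mathcal J}$, and finally rescale $\lambda=\mu/\alpha(s)$ and collect. That much is faithful, and your identification of where $\alpha(t)\ne0$ and $\sigma\ne-\omega_3$ enter is correct.

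The gap is in the final collection step. After inserting $a_2,a_3,b_2,b_3$ (which contain $(\alpha v)''$, $(\alpha v^\perp)''$ and $(\alpha v)'''$), the raw expression is \emph{not} of the form $A_0+A_{02}\alpha^2+A_1\mu\alpha'/\alpha$: it also carries terms proportional to $\alpha''$ and to $(\alpha')^2$ (the paper's coefficient $A_{12}$), and the tools you list cannot remove them. The identities you invoke --- \autoref{eqRecSymmetryFirstOrder} and its first two derivatives \emph{in $t$} --- only dispose of the excess time derivatives. To cancel the $\alpha''$ terms and to prove $A_{12}=0$ the paper differentiates \autoref{eqRecSymmetryFirstOrder} once and twice \emph{with respect to the radial Fourier variable} $\lambda$ (equivalently $\mu$), obtaining for instance $\alpha\,\partial_t\dk{1}{\tilde{\mathcal J}}(s,\lambda\alpha v)\lsem v\rsem=\lambda\omega_3\alpha^2\dk{2}{\tilde{\mathcal J}}(s,\lambda\alpha v)\lsem v^\perp,v\rsem+\omega_3\alpha\,\dk{1}{\tilde{\mathcal J}}(s,\lambda\alpha v)\lsem v^\perp\rsem$ and its further $\lambda$-derivative; a mixed $t$- and $\mu$-derivative of the same identity is also needed to bring $A_1$ and $A_0$ into the stated form. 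Without these radial derivatives your plan stalls at an expression with leftover $\alpha''$ and $(\alpha')^2$ contributions that do not match \autoref{eq:RecSymmetryThirdOrder}. A smaller point: the factorisation through $\omega_3+\sigma$ does not drop out of the $v^\perp$-components of $a_1,b_1$ alone as you suggest; in the paper it emerges only after the cancellations just described have been carried out term by term.
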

\begin{proof}
 The proof is provided in the \autoref{sec:app}.
\end{proof}

It is possible to recover for every time $t\in\R$ the parameter $\alpha(t)$ using \autoref{propD3}. Since \autoref{eq:RecSymmetryThirdOrder} holds for every $\mu\in\R$, we can consider it as an overdetermined linear system for $\alpha^2(t)$ and $\frac{\alpha'(t)}{\alpha(t)}$, see \autoref{sec:numerics}.

\begin{example}[The case $\omega_3=-\sigma$]
Apparently, the coefficients $A_0,A_{02}$ and $A_1$ in \autoref{propD3} vanish, if $\omega_3=-\sigma$, in which case 
\autoref{eq:RecSymmetryThirdOrder} becomes trivial. Let us consider this case in more detail. 
For given cylindrical components $t \mapsto \omega(t)$, $t \mapsto v(t)$ ($t \mapsto \varphi(t)$, respectively), 
defined in \autoref{eqDefO_v} we get in this special situation
$\sigma(t)=\varphi'(t)=-\omega_3(t)$. Now, we calculate the rotation $R$, defined in \autoref{eqDefOmega} for given $t \mapsto \omega(t)$.
To simplify the calculations, we assume that $\alpha'=0$ and $\omega_3'=0$. Solving \autoref{eqDefOmega} with the initial condition $R(0)=\mathds1$ then gives
\begin{equation*} R(t) = \begin{pmatrix}\cos(\omega_3 t)&-\sin(\omega_3 t)&0\\ -\sin(\omega_3t)\cos(\alpha t)&-\cos(\omega_3t)\cos(\alpha t)&\sin(\alpha t)\\\sin(\omega_3 t)\sin(\alpha t)&\cos(\omega_3t)\sin(\alpha t)&\cos(\alpha t)\end{pmatrix}. \end{equation*}
We calculate
\begin{equation*} P\left(e_3\times(R(s)^TR(t)e_3)\right)=\sin(\alpha(t-s))\begin{pmatrix}\cos(\omega_3s)\\-\sin(\omega_3s)\end{pmatrix}\end{equation*}
and circle back to \autoref{eqRecSymmetry}, which then reads
\begin{equation*}
\tilde{\mathcal J}\left(s,\frac\lambda{t-s}\sin(\alpha(t-s))\begin{pmatrix}\cos(\omega_3s)\\-\sin(\omega_3s)\end{pmatrix}\right) = \tilde{\mathcal J}\left(t,\frac\lambda{s-t}\sin(\alpha(t-s))\begin{pmatrix}-\cos(\omega_3t)\\\sin(\omega_3t)\end{pmatrix}\right)
\end{equation*}
and holds for all $\lambda\in\R$. Since the term $\sin(\alpha(t-s))$ can be absorbed into the variable $\lambda$, this equation contains no information about $\alpha$. Therefore, it is not possible to recover this special motion completely with our technique.
\end{example}

%Numerics
\section{Numerics}\label{sec:numerics}
We consider for the points $P_1=(1,\frac12,-1)$, $P_2=(-\frac12,1,1)$, $P_3=(0,-1,\frac12)$ and the diagonal matrix $D=\mathrm{diag}(\sqrt2,1,1)$ as an example the attenuation coefficient
\begin{equation}\label{eq:defnumericsattcoeff}
u(x) = \prod_{i=1}^3|x-P_i|^2\e^{-\frac14|Dx|^2}
\end{equation}
and define the motion $T$ by the choice
\[ a(t) = \begin{pmatrix}\cos(6t)\cos(12t)\\\cos(6t)\sin(12t)\\\sin(t)\end{pmatrix}\text{ and }\omega(t)=\begin{pmatrix}\alpha(t)\cos(\varphi(t))\\\alpha(t)\sin(\varphi(t))\\\omega_3(t)\end{pmatrix} \]
with
\[ \alpha(t)=1+10t^2,\;\varphi(t) = \pi t+\tfrac\pi3,\text{ and }\omega_3(t) = \tfrac12+\sqrt{\tfrac12+5t}. \]

We discretise the transformed function by evaluating it at the points
\[ (j_1,j_2,j_3)\delta_x,\;j\in\{-512,\ldots,511\}^2\times\{-256,\ldots,255\}\text{ with }\delta_x=0.05 \]
and calculate the attenuation projection along the third direction at every time step
\[ \ell\,\delta_t,\;\ell\in\{0,\ldots,999\}\text{ for }\delta_t=0.0005. \]
Some attenuation projection images are depicted in \autoref{fig:projections}.

\begin{figure}
	\includegraphics[scale=0.28]{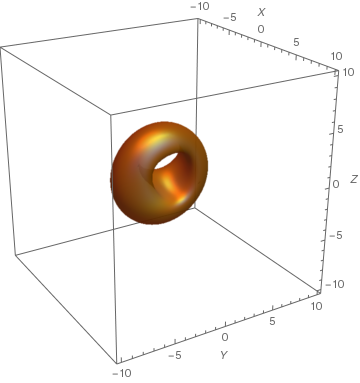}
	\includegraphics[scale=0.28]{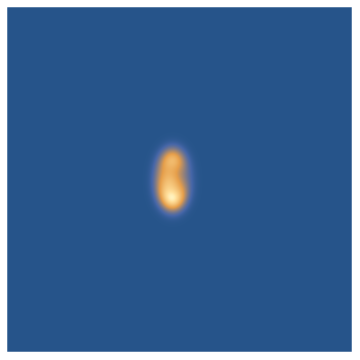}
	\includegraphics[scale=0.28]{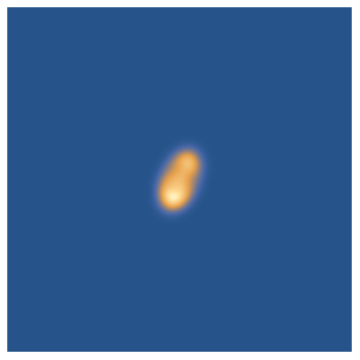}
	\includegraphics[scale=0.28]{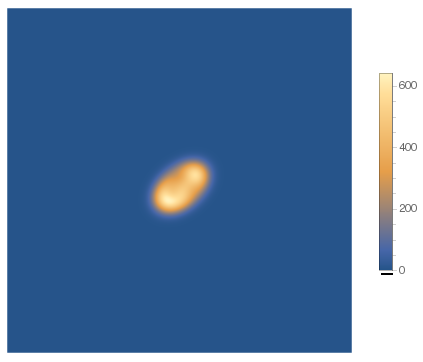}
	\caption{Contour plot of the attenuation coefficient defined in \autoref{eq:defnumericsattcoeff} for $u(x)=42$ and the resulting attenuation projection images at time steps $0,0.25$ and $0.425$.}
	\label{fig:projections}
\end{figure}

For the reconstruction, we proceed for each time step as follows:
\begin{enumerate}
\item
We calculate the center of the projections and read off the first two components of the displacement $a(t)$ via \autoref{eqRecCenter}, see \autoref{fgRecCenter}.
\begin{figure}
\begin{center}
\includegraphics{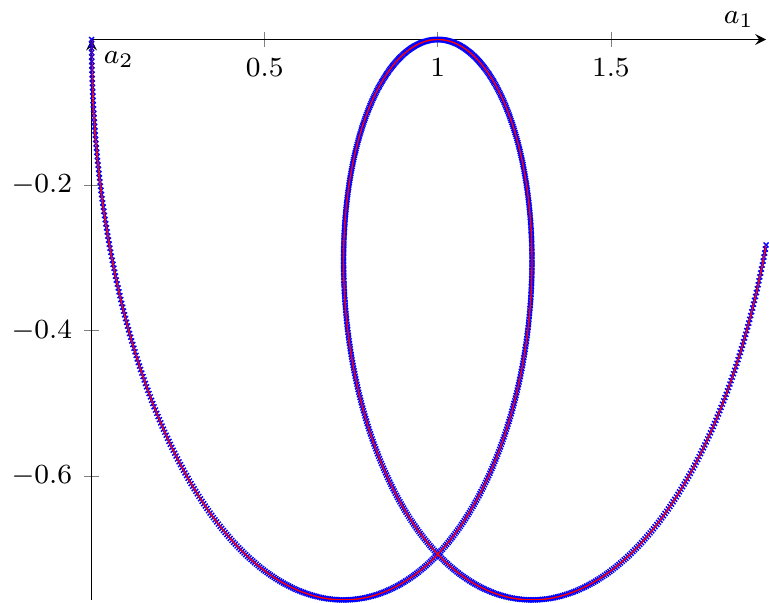}
\end{center}
\caption{Reconstruction of the function $a$ (the blue crosses are the reconstructed values, the red curve is the exact function).}\label{fgRecCenter}
\end{figure}
\item
To reconstruct the third component $\omega_3(t)$ as function of the yet unknown angular value, we interpret 
\autoref{eqRecSymmetryFirstOrder} as a least square minimisation problem for the function
\[ 
\sum_{j=-512}^{511} 
 \left|\partial_t\tilde J\left(t,j\delta_x\begin{pmatrix}\cos(\varphi(t))\\\sin(\varphi(t))\end{pmatrix}\right)-\omega_3(t)
  \dk{1}{\tilde J}\left(t,j\delta_x\begin{pmatrix}\cos(\varphi(t))\\\sin(\varphi(t))\end{pmatrix}\right)
  \lsemb 
  j\delta_x\begin{pmatrix}-\sin(\varphi(t))\\ \cos(\varphi(t))\end{pmatrix} \rsemb \right|^2, \]
where we remark that $\tilde{\mathcal J}$ and its derivatives can be nicely interpolated, with the help of the Whittaker--Shannon interpolation formula, for example, as they are Fourier transforms of a function which is very rapidly decreasing. We call the minimisation point $\tilde\omega_3(\varphi(t))$.
\item
To obtain the angular function $\varphi(t)$, we minimise the function
\begin{multline*}
 \Phi:\phi\mapsto\max_{j\in\{-512,\ldots,511\}} \Bigg\{ \left( \left| 
   \tilde\omega_3(\phi)\dk{1}{\tilde J} \left( t, j \delta_x \begin{pmatrix}\cos(\phi)\\\sin(\phi)\end{pmatrix}\right) 
   \lsemb j\delta_x\begin{pmatrix}-\sin(\phi)\\\cos(\phi)\end{pmatrix} \rsemb\right|^2+\varepsilon\right)^{-1} \\
\times
\left|\partial_t\tilde J\left(t,j\delta_x\begin{pmatrix}\cos(\phi)\\\sin(\phi)\end{pmatrix}\right)-\tilde\omega_3(\phi)
\dk{1}{\tilde J}\left(t,j\delta_x\begin{pmatrix}\cos(\phi)\\\sin(\phi)\end{pmatrix}\right)\lsemb j\delta_x\begin{pmatrix}-\sin(\phi)\\\cos(\phi)\end{pmatrix}\rsemb\right|^2\Bigg\},
\end{multline*}
on $[0,\pi)$ with some tiny $\varepsilon>0$. Since the function is quite complicated and the minimisation problem is only one-dimensional, see \autoref{fgResidual}, we chose to minimise by brute force, that is, we simply evaluate it for sufficiently many values $\phi$ and pick the one with smallest value. The minimiser gives us $\varphi(t)$ and thus $\omega_3(t)=\tilde\omega_3(\varphi(t))$, see \autoref{fgRecAngle}.
\begin{figure}
\begin{center}
\includegraphics{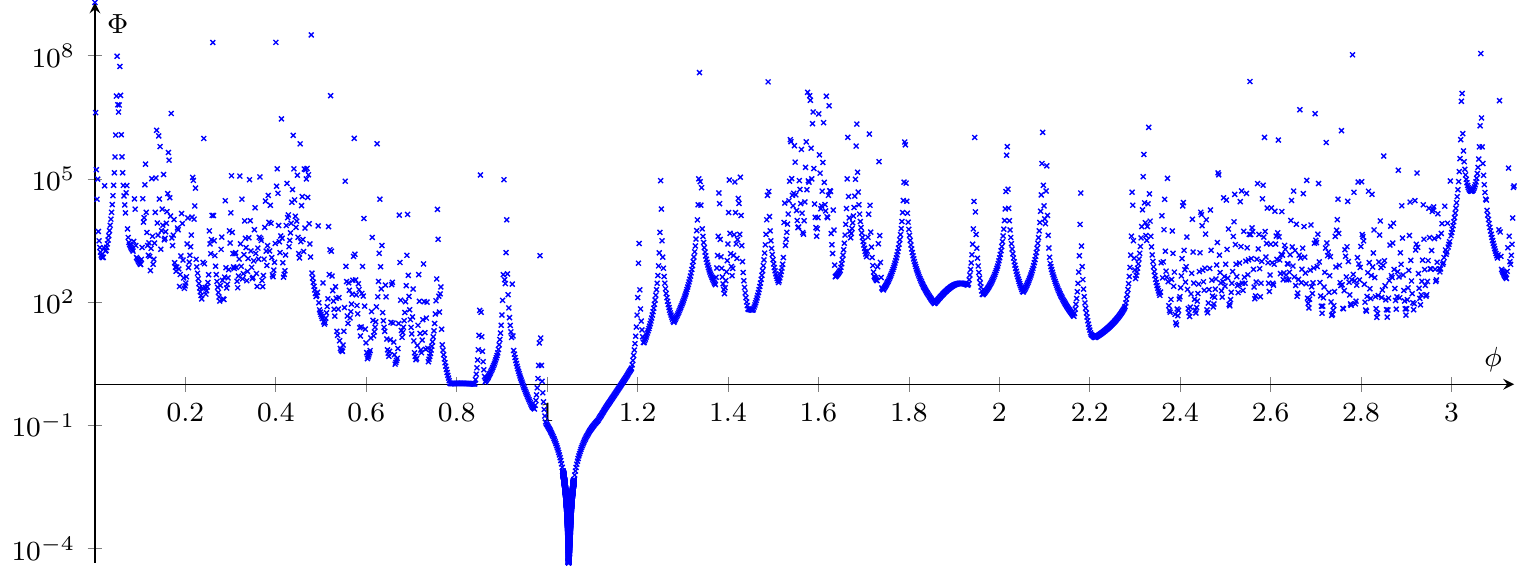}
\end{center}
\caption{Graph of the function $\Phi$ at the time step $\delta_t$.}\label{fgResidual}
\end{figure}

\begin{figure}
\includegraphics{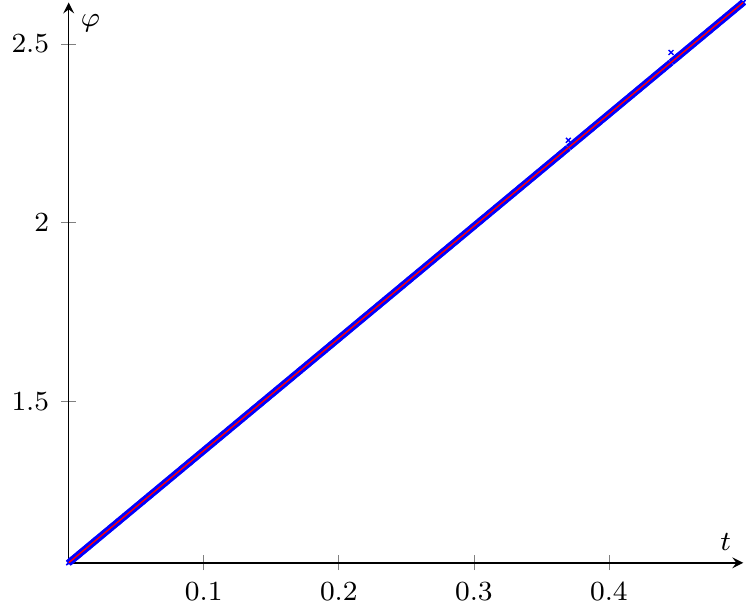} \hfill \includegraphics{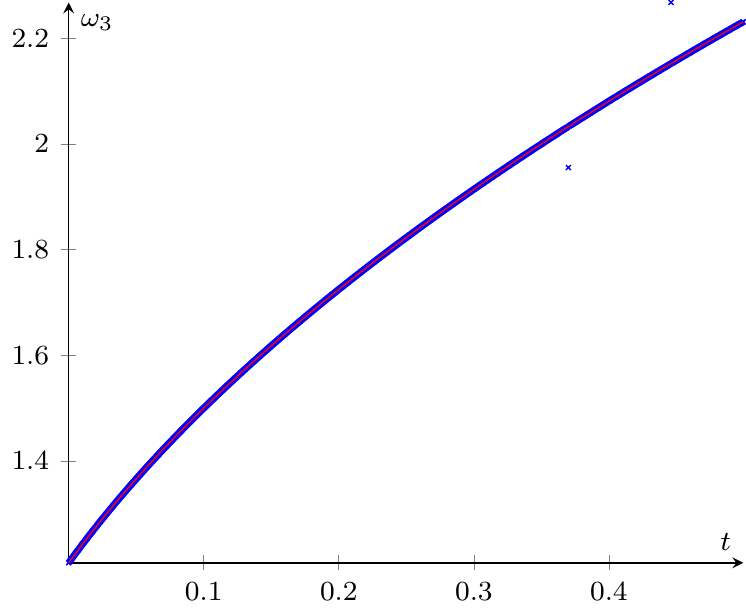}
\caption{Reconstruction of $\varphi$ on the left and $\omega_3$ on the right (the blue crosses are the reconstructed values, the red curve is the exact function).}\label{fgRecAngle}
\end{figure}
\item
To obtain $\alpha$, we consider \autoref{eq:RecSymmetryThirdOrder} as overdetermined linear system (one equation for each value $\mu\in\{j\delta_x\mid j\in\{-512,\ldots,511\}\}$) for $\alpha^2(t)$ and $\frac{\alpha'(t)}{\alpha(t)}$, where the coefficients can be explicitly calculated with the values of $\varphi$ and $\omega_3$ obtained so far. Solving the corresponding normal equations, gives us the values $\alpha(t)$, see \autoref{fgRecAlpha}.
\begin{figure}
\begin{center}
\includegraphics{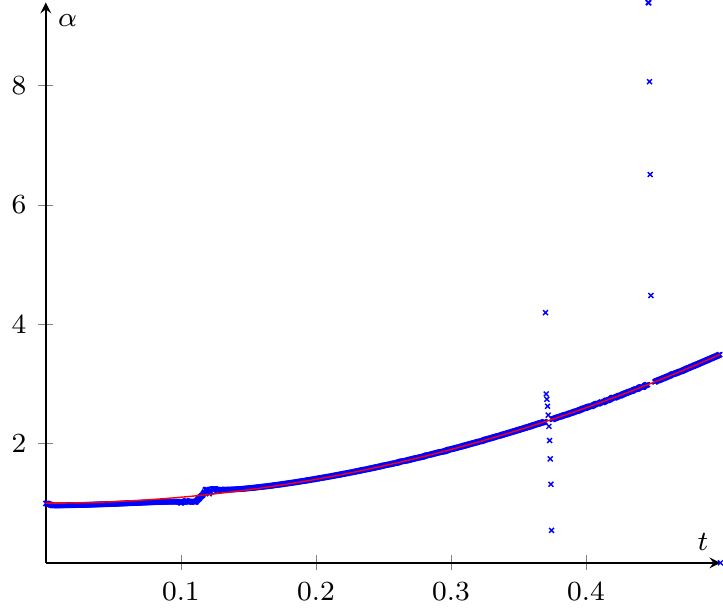}
\end{center}
\caption{Reconstruction of $\alpha$ (the blue crosses are the reconstructed values, the red curve is the exact function).}\label{fgRecAlpha}
\end{figure}
\end{enumerate}

Since we chose a sufficiently high resolution in time and space, the simulations did not require any sort of regularisation (we did not enforce continuity of the reconstructions in any way) and serve more as a test for the correctness of the formulas, see the plot of the reconstruction errors in \autoref{fgRecErrors}.
\begin{figure}
\begin{center}
\includegraphics{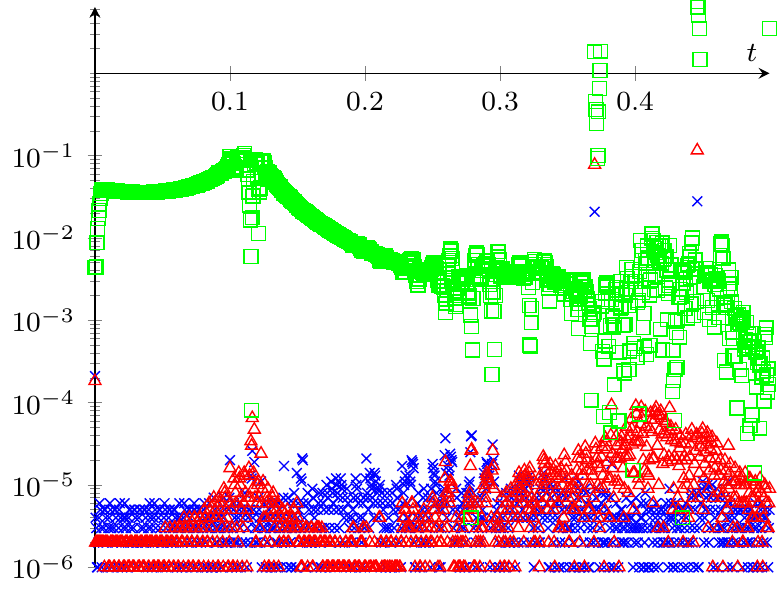}
\end{center}
\caption{Absolute errors in the reconstructions of $\varphi$ (the crosses), $\omega_3$ (the triangles), and $\alpha$ (the squares).}\label{fgRecErrors}
\end{figure}

% Conclusion
\section*{Conclusions and Outlook}
Tomographic reconstruction of a trapped object has the intrinsic problem that the object's rotation is typically not 
as smooth and well-defined (and therefore known) as applied during tomographic data acquisition in other settings. 
Instead, the object undergoes a more irregular motion described as time-dependent translations and rotations. 

In the present work we deliver a first step into the direction of tomographic reconstruction of optically and/or 
acoustically trapped particles. We assume imaging at low numerical aperture of absorptive samples with amplitude contrast, which means 
that the attenuation projection describes the imaging process reasonably well. 

For this case, we demonstrate---by explicit reconstruction---how the motional parameters can be inferred, 
wherever permitted uniquely. However, not all parameters are uniquely retrievable, for example, the component of the translation in the direction the projection 
images are taken cannot be seen, or, in case of symmetries of the sample, we encounter fake solutions of our equations for the angular velocity (in the extreme case of a spherically symmetric object, no information on the motion is available). More uniqueness studies are on the way.

The mathematical problem formulation reveals similarities to single-particle Cryo-EM, however, here we can avoid the time-consuming 
step of labelling of attenuation projection images, because the labels can be identified with a time-stamp of the recordings over time.
Nevertheless, one could envision that our method could serve as a corrector method, when images in Cryo-EM are labelled by neighboring 
projection images, and difference quotients are used to approximate artificial time derivatives. 
In the future, the proposed motion estimation will be tested on video data acquired from biological samples held in 
optical and/or acoustic traps, see \autoref{fg:Pollen}. Moreover, it will be necessary to study corrections or alternative approaches required when going from 
attenuation projection images to optical images. Image formation of amplitude- or phase-samples in a microscope may significantly 
differ from attenuation projections, as explained in~\cite{Arr99,ArrScho09}.

\begin{figure}
\begin{center}
\includegraphics[width=0.75\textwidth]{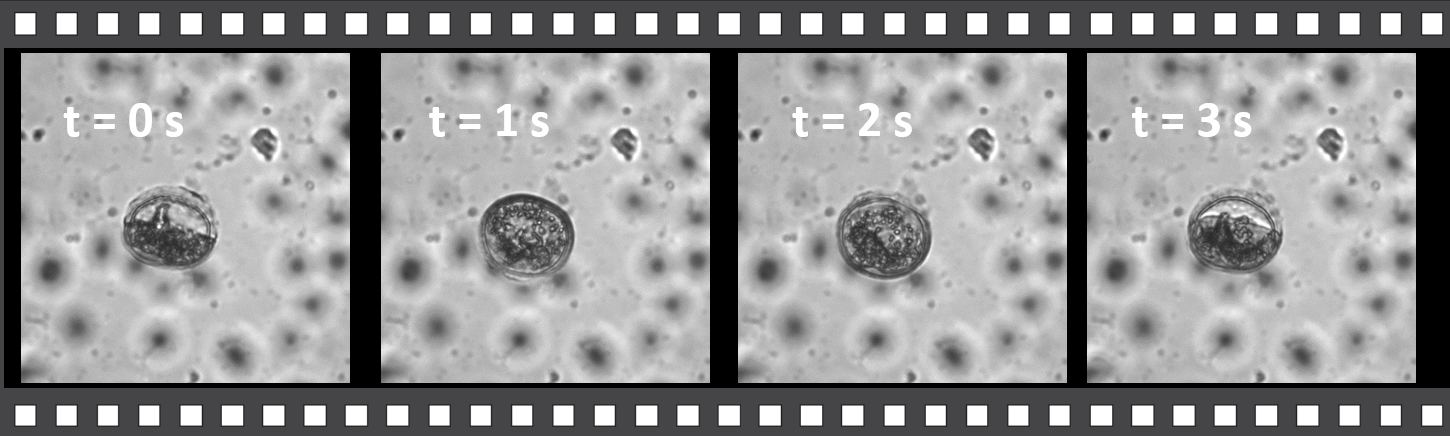}
\end{center}
\caption{Video frames from a video recorded of a pollen grain (Sansevieria trifasciata) rotating in an acoustic trap.}\label{fg:Pollen}
\end{figure}

\subsection*{Acknowledgements}
The authors are supported by the Austrian Science Fund (FWF), 
with SFB F68, project F6804-N36 (Coupled Physics Imaging), project F6806-N36 (Inverse Problems in Imaging of Trapped Particles), and project F6807-N36 (Tomography with Uncertainties). The authors thank Mia  Kv\r{a}le  L\o vmo and Benedikt Pressl for providing the video of the trapped pollen grain.

\appendix
\section{Appendix} \label{sec:app}
The main ingredient of our results for motion estimation in \autoref{section_method} are based on calculating 
higher order derivatives of composed functions. For this we require a tensor notation:

\subsection*{Tensor Derivatives}
Let 
\begin{equation*}
\begin{aligned}
 f: \R \times \R^2 &\to \C,\\
    (t,k) &\mapsto f(t,k)
\end{aligned}
\end{equation*}
and 
\begin{equation*}
\begin{aligned}
 g: \R &\to \R^2.\\
    t &\mapsto \begin{pmatrix} g_1(t)\\ g_2(t) \end{pmatrix}
\end{aligned}
\end{equation*}
In this paper we use derivatives of up to order $3$ of the composed function 
\begin{equation*}
 \begin{aligned}
 h: \R &\to \R.\\
    t &\mapsto h(t):=f(t,g(t))
\end{aligned}
\end{equation*}
These derivatives will be expressed via a tensor notation:
\begin{itemize}
 \item We denote by $\dk{i}{f}(t,\kappa) :\underbrace{\R^2 \times \R^2 \cdots \R^2}_{i \text{ times}} \to \C$ the derivative of order 
       $i=0,1,2,\ldots$ of the function $k \in \R^2 \mapsto f(t,k)$ with respect to the variable 
       $k$ for fixed $t$ at a point $\kappa \in \R^2$. We write the evaluation of the tensor $\dk{i}{f}(t,\kappa)$ in the form
       \begin{equation*}
        \dk{i}{f}(t,\kappa)\lsem v_1,v_2,\cdots,v_i \rsem.
       \end{equation*}
 \item The time derivatives %$\frac{d^i}{dt^i}$, $i=1,2,3,\ldots$ 
       of the composed function $t \to h(t)$ up to order three then can be expressed with the simplifying notation 
       $\dk{i}{f} = \dk{i}{f}(t,g(t))$, as follows:
	\begin{equation} \label{eq:tot-abl} \begin{aligned}
	h'(t) = & \dk{1}{f} \lsem g' \rsem(t) +\partial_tf(t,g(t))\\
	h''(t) = &  \dk{2}{f} \lsem g',g'\rsem (t) +
	            \partial_t {\dk{1}{f} \lsem g'\rsem}(t) + 
	            \dk{1}{f} \lsem g'' \rsem(t) + \partial_{tt}f(t,g(t))\\
	h'''(t) = & \dk{3}{f} \lsem g',g',g'\rsem(t)  + 
	            \partial_t {\dk{2}{f}\lsem g',g'\rsem}(t) + 
	            \partial_{tt}{\dk{1}{f}\lsem g'\rsem}(t) +\\
	           & \quad 3 \biggl(\dk{2}{f}\lsem g',g''\rsem(t) +\partial_t {\dk{1}{f}\lsem g''\rsem (t)} \biggr)+
	             \dk{1}{f}\lsem g'''\rsem(t) +
	             \partial_{ttt}f(t,g(t)).
	\end{aligned}
	\end{equation}
	Note that for $i=1,2$ the tensor notation simplifies to the usual matrix vector multiplications:
	\begin{equation*}
	 \begin{aligned}
	     \dk{1}{f}\lsem g'\rsem(t) &= \nabla_k[f](t,g(t)) \cdot g'(t),\\  
	     \dk{2}{f}\lsem g',g'\rsem(t) &= g'(t)^T \; \nabla_k^2[f](t,g(t)) \; g'(t).\\           
	 \end{aligned}
	\end{equation*}       
\end{itemize}

\subsection*{Higher Order Expansion of Angular Velocities}
\begin{lemma}[Taylor Polynomials]
	\label{thTayPolynomial}
	Let $R\in\Rspace$, $t\in\R$ and $\omega\in\omspace$ be the angular velocity corresponding to $R$. 
	Then, we have
	\begin{align*}
	\frac1{t-s}P(e_3\times R(s)^{\mathrm T}R(t)e_3) &= \sum_{j=0}^3a_j(s)(t-s)^j+\tilde a(s,t), \\
	\frac1{s-t}P(e_3\times R(t)^{\mathrm T}R(s)e_3) &= \sum_{j=0}^3b_j(s)(t-s)^j+\tilde b(s,t)
	\end{align*}
	with remainder terms $a,b$ fulfilling $\lim_{t\to s}\frac{\tilde a(s,t)}{(t-s)^3}=\lim_{t\to s}\frac{\tilde b(s,t)}{(t-s)^3}=0$ and with the coefficients
	\begin{equation}\label{eqTayCoefficients}
	\begin{aligned}
	a_0 &= \alpha v, \\
	2a_1 &= \omega_3\alpha v^\perp+(\alpha v)', \\
	6a_2 &= -(\alpha^2+\omega_3^2)\alpha v+2\omega_3(\alpha v^\perp)'+\omega_3'\alpha v^\perp+(\alpha v)'', \\
	24a_3 &= -(\alpha^2+\omega_3^2)\omega_3\alpha v^\perp+2\omega_3\omega_3'\alpha v-2\omega_3^2(\alpha v)'-5(\alpha\alpha'+\omega_3\omega_3')\alpha v \\
	&\qquad\qquad+3\omega_3(\alpha v^\perp)''-(\alpha^2+\omega_3^2)(\alpha v)'+3\omega_3'(\alpha v^\perp)'+\omega_3''\alpha v^\perp+(\alpha v)''', \\
	b_0 &= \alpha v, \\
	2b_1 &= -\omega_3\alpha v^\perp+(\alpha v)', \\
	6b_2 &= -(\alpha^2+\omega_3^2)\alpha v-\omega_3(\alpha v^\perp)'-2\omega_3'\alpha v^\perp+(\alpha v)'', \\
	24b_3 &= (\alpha^2+\omega_3^2)\omega_3\alpha v^\perp-2\omega_3\omega_3'\alpha v+2\omega_3^2(\alpha v)'-3(\alpha\alpha'+\omega_3\omega_3')\alpha v \\
	&\qquad\qquad-\omega_3(\alpha v^\perp)''-3(\alpha^2+\omega_3^2)(\alpha v)'-3\omega_3'(\alpha v^\perp)'-3\omega_3''\alpha v^\perp+(\alpha v)'''.
	\end{aligned}
	\end{equation}
\end{lemma}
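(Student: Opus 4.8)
\emph{Proof outline.}\quad The plan is to Taylor-expand the $SO(3)$-valued map $t\mapsto R(s)^{\mathrm T}R(t)$ at $t=s$ and to push these expansions through the operations $e_3\times(\cdot)$, the projection $P$ and the division by $t-s$. Fix $s\in\R$ and abbreviate $g(s,t):=R(s)^{\mathrm T}R(t)e_3$ and $\tilde g(s,t):=R(t)^{\mathrm T}R(s)e_3$; since $R\in\Rspace$ both are $C^4$ in $t$, and both equal $e_3$ at $t=s$. From \autoref{eqDefOmega} one has $R'(t)x=R(t)(\omega(t)\times x)$, and differentiating $R(t)^{\mathrm T}R(t)=\mathds1$ gives $(R(t)^{\mathrm T})'x=-\omega(t)\times(R(t)^{\mathrm T}x)$. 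Using $P(e_3\times u)=(Pu)^\perp$ for $u\in\R^3$ (writing $w^\perp:=(-w_2,w_1)^{\mathrm T}$ for $w\in\R^2$, as for $v^\perp$) together with $P(e_3\times e_3)=0$, the numerator $t\mapsto P(e_3\times g(s,t))$ is $C^4$ and vanishes at $t=s$, so $t\mapsto\frac1{t-s}P(e_3\times g(s,t))$ is $C^3$ and Taylor's theorem with Peano remainder yields an expansion of the asserted form with
\[
 a_j(s)=\frac1{(j+1)!}\bigl(Pc_{j+1}(s)\bigr)^\perp,\qquad c_k(s):=\partial_t^kg(s,t)\big|_{t=s},
\]
and similarly $b_j(s)=\frac1{(j+1)!}\bigl(P\tilde c_{j+1}(s)\bigr)^\perp$ with $\tilde c_k(s):=\partial_t^k\tilde g(s,t)\big|_{t=s}$; the claimed remainder estimates are exactly these Peano estimates.

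To evaluate the coefficient vectors $c_k$, iterating $R'=R(\omega\times\cdot)$ shows $\partial_t^kg(s,t)=R(s)^{\mathrm T}R(t)\,d_k(t)$ with $d_0\equiv e_3$ and $d_{k+1}=\omega\times d_k+d_k'$; setting $t=s$ gives $c_k=d_k$, i.e.\ $c_0=e_3$ and $c_k=\omega\times c_{k-1}+c_{k-1}'$ (now with $'$ the $s$-derivative). Decomposing $c_k=P^{\mathrm T}p_k+z_ke_3$ with $p_k:=Pc_k\in\R^2$ and $z_k:=\inner{c_k}{e_3}\in\R$, and inserting $\omega=\alpha P^{\mathrm T}v+\omega_3e_3$ together with $e_3\times P^{\mathrm T}w=P^{\mathrm T}(w^\perp)$, $P^{\mathrm T}v\times e_3=-P^{\mathrm T}(v^\perp)$ and $P^{\mathrm T}v\times P^{\mathrm T}w=\inner{v^\perp}{w}e_3$, the recursion splits into the coupled scalar/vector system
\[
 p_0=0,\quad z_0=1,\qquad p_k=p_{k-1}'+\omega_3p_{k-1}^\perp-\alpha z_{k-1}v^\perp,\quad z_k=z_{k-1}'+\alpha\inner{v^\perp}{p_{k-1}}.
\]
Running this for $k=1,\dots,4$, simplifying with $v'=\varphi'v^\perp$ and the two-dimensional identities $(w')^\perp=(w^\perp)'$, $(w^\perp)^\perp=-w$ (so, e.g., $\bigl((\alpha v)'\bigr)^\perp=(\alpha v^\perp)'$ and $\bigl((\alpha v^\perp)''\bigr)^\perp=-(\alpha v)''$), and then using $a_j=\frac1{(j+1)!}p_{j+1}^\perp$, reproduces the stated $a_j$; as a spot check, $p_1=-\alpha v^\perp$ gives $a_0=\alpha v$ and $p_2=\omega_3\alpha v-(\alpha v^\perp)'$ gives $2a_1=\omega_3\alpha v^\perp+(\alpha v)'$.

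For the $b_j$ I would run the same argument on $\tilde g$: from $(R^{\mathrm T})'x=-\omega\times(R^{\mathrm T}x)$ the Leibniz rule gives $\tilde c_0=e_3$ and $\tilde c_k=-\sum_{i=0}^{k-1}\binom{k-1}{i}\omega^{(i)}\times\tilde c_{k-1-i}$, after which one proceeds exactly as before. Independently, the $b_j$ can be read off from the $a_j$ by relabelling: the left-hand side carrying $b$ is $G(t,s)$, where $G(\sigma,\tau):=\frac1{\tau-\sigma}P\bigl(e_3\times R(\sigma)^{\mathrm T}R(\tau)e_3\bigr)=\sum_{j=0}^3a_j(\sigma)(\tau-\sigma)^j+o(|\tau-\sigma|^3)$, so expanding $a_j(t)$ about $s$ in $G(t,s)$ produces $b_n(s)=\sum_{j=0}^n\frac{(-1)^j}{(n-j)!}a_j^{(n-j)}(s)$, which collapses to the formulas in \autoref{eqTayCoefficients} (e.g.\ $b_0=a_0=\alpha v$ and $2b_1=2(a_0'-a_1)=(\alpha v)'-\omega_3\alpha v^\perp$); this identity doubles as a consistency check.

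The only real obstacle is the sheer bookkeeping at orders $k=3,4$: because $\omega\times(\cdot)$ and its time derivative do not commute, the order of the iterated cross products must be tracked carefully, and differentiating the moving frame $\{v,v^\perp\}$ breeds many terms. Organising the computation through the horizontal/vertical split $(p_k,z_k)$ and treating $\alpha v$, $\alpha v^\perp$ and their derivatives as indivisible atoms---invoking only $v'=\varphi'v^\perp$, $(w')^\perp=(w^\perp)'$ and $(w^\perp)^\perp=-w$---keeps it manageable, and checking the low-order coefficients against the leading behaviour of $\frac1{t-s}P(e_3\times g(s,t))$ catches sign slips early.
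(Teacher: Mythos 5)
Your proposal is correct in substance and rests on the same core idea as the paper's proof---Taylor-expanding $t\mapsto R(s)^{\mathrm T}R(t)e_3$ at $t=s$ via the structure equation $R'x=R(\omega\times x)$ and identifying $a_j$ with the $(j+1)$-st derivative divided by $(j+1)!$---but the bookkeeping is organised differently, and the difference is worth recording. The paper abbreviates $B=R^{\mathrm T}R'$, computes $R^{\mathrm T}R^{(k)}$ for $k\le4$ as non-commutative polynomials in $B,B',B'',B'''$, applies the result to $e_3$ using $Bx=\omega\times x$ together with $f\times(g\times h)=\inner{f}{h}g-\inner{f}{g}h$, and obtains the $b_j$ from the antisymmetry $B=-B^{\mathrm T}$. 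You instead propagate only the vector $e_3$ through the first-order recursion $c_k=\omega\times c_{k-1}+c_{k-1}'$ and split it into planar and axial components $(p_k,z_k)$, which turns the iterated cross products into a two-term scalar/vector recursion and dispenses with both the matrix polynomials and the BAC--CAB identity; your relabelling identity $b_n(s)=\sum_{j=0}^n\frac{(-1)^j}{(n-j)!}a_j^{(n-j)}(s)$ is a genuinely useful extra that the paper does not have, since it derives the $b$'s from the $a$'s and doubles as an independent consistency check on the high-order coefficients.

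Two caveats. First, your direct formula for the $b$'s has a sign slip: because the prefactor on the second expansion is $\frac1{s-t}=-\frac1{t-s}$, one must take $b_j=-\frac1{(j+1)!}\bigl(P\tilde c_{j+1}\bigr)^\perp$ (compare the paper's $b_j=-\frac1{(j+1)!}P(e_3\times(R^{(j+1)})^{\mathrm T}Re_3)$); as written, your formula would give $b_0=-\alpha v$. Your relabelling route gives the correct sign, so the discrepancy would surface immediately, but the ``similarly'' hides a sign flip. Second, the lemma's entire content is the explicit coefficients at orders $2$ and $3$, and you verify only $a_0,a_1,b_0,b_1,b_2$; the claim that running the recursion to $k=4$ ``reproduces the stated $a_j$'' is asserted, not shown. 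The framework you set up is complete and mechanical, so this is a matter of execution rather than of a missing idea, but a full proof would display $p_3,p_4,z_2,z_3$ (or the equivalent) as the paper displays its intermediate expressions for $6a_2,24a_3$, etc.
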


\begin{proof}
	We abbreviate the matrix-valued function $R^{\mathrm T}R'$ by $B$ and calculate from the identity $R'=RB$ further derivatives by differentiation:
	\begin{align*}
	R^{\mathrm T}R''&=B^2+B', \\
	R^{\mathrm T}R'''&=B^3+2BB'+B'B+B'', \\
	R^{\mathrm T}R''''&=B^4+3B^2B'+2BB'B+3BB''+B'B^2+3(B')^2+B''B+B'''.
	\end{align*}
	With this, we can calculate the coefficients
	\[ a_j=\frac1{(j+1)!}P\big(e_3\times R^{\mathrm T}R^{(j+1)}e_3\big),\qquad b_j=-\frac1{(j+1)!}P\big(e_3\times(R^{(j+1)})^{\mathrm T}Re_3\big), \]
	where we can use the antisymmetry $B=-B^{\mathrm T}$ for the calculatuon of the coefficients $b_j$.
	
	Plugging in \autoref{eqDefOmega} to write $Bx=\omega\times x$ and applying the identity $f\times(g\times h)=\left<f,h\right>g-\left<f,g\right>h$ for $f,g,h\in\R^3$, we find
	\begin{align*}
	a_0 &= P\big(\omega\big), \\
	2a_1 &= P\big(\omega_3e_3\times\omega+\omega'\big), \\
	6a_2 &= P\big(-|\omega|^2\omega+2\omega_3e_3\times\omega'+\omega_3'e_3\times\omega+\omega''\big), \\
	24a_3 &= P\big(-|\omega|^2\omega_3e_3\times\omega+3(\omega_3\omega_3'\omega-\omega_3^2\omega'-\left<\omega,\omega'\right>\omega)-2\left<\omega,\omega'\right>\omega+3\omega_3e_3\times\omega'' \\
	&\qquad\qquad+(\omega_3^2\omega'-\omega_3\omega_3'\omega-|\omega|^2\omega')+3\omega_3'e_3\times\omega'+\omega_3''e_3\times\omega+\omega'''\big) \\
	&= P\big(-|\omega|^2\omega_3e_3\times\omega+2\omega_3\omega_3'\omega-2\omega_3^2\omega'-5\left<\omega,\omega'\right>\omega+3\omega_3e_3\times\omega'' \\
	&\qquad\qquad-|\omega|^2\omega'+3\omega_3'e_3\times\omega'+\omega_3''e_3\times\omega+\omega'''\big), \\
	b_0 &= P\big(\omega\big), \\
	2b_1 &= P\big(-\omega_3e_3\times\omega+\omega'\big), \\
	6b_2 &= P\big(-|\omega|^2\omega-\omega_3e_3\times\omega'-2\omega_3'e_3\times\omega+\omega''\big), \\
	24b_3 &= P\big(|\omega|^2\omega_3e_3\times\omega+(\omega_3\omega_3'\omega-\omega_3^2\omega'-\left<\omega,\omega'\right>\omega)-2\left<\omega,\omega'\right>\omega-\omega_3e_3\times\omega'' \\
	&\qquad\qquad+3(\omega_3^2\omega'-\omega_3\omega_3'\omega-|\omega|^2\omega')-3\omega_3'e_3\times\omega'-3\omega_3''e_3\times\omega+\omega'''\big) \\
	&= P\big(|\omega|^2\omega_3e_3\times\omega-2\omega_3\omega_3'\omega+2\omega_3^2\omega'-3\left<\omega,\omega'\right>\omega-\omega_3e_3\times\omega'' \\
	&\qquad\qquad-3|\omega|^2\omega'-3\omega_3'e_3\times\omega'-3\omega_3''e_3\times\omega+\omega'''\big).
	\end{align*}
	Inserting $P(\omega)=\alpha v$ and using Taylor's theorem, we arrive at the stated Taylor polynomials.
\end{proof}

\begin{proof}[of \autoref{propD3}]
	We use again in \autoref{eqRecSymmetry} the Taylor polynomials
	\begin{align*}
	\frac1{t-s}P(e_3\times(R(s)^{\mathrm T}R(t)e_3)) &= a_0(s)+a_1(s)(t-s)+o(t-s)\quad\text{and} \\
	\frac1{s-t}P(e_3\times(R(t)^{\mathrm T}R(s)e_3)) &= b_0(s)+b_1(s)(t-s)+o(t-s)
	\end{align*}
	with the coefficients $a_j$ and $b_j$, $j=0,1$, calculated in \autoref{thTayPolynomial}. Then, by 
	taking three derivatives with respect to $t$  and evaluating at $t=s$ 
	(omitting the argument $s$ in the coefficients $a$ and $b$) we find with the notation of \autoref{eq:tot-abl} that
	\begin{align*}
	&\dk{3}{\tilde{\mathcal J}}(s,\lambda a_0)\lsem \lambda a_1,\lambda a_1,\lambda a_1 \rsem + 
	  6\dk{2}{\tilde{\mathcal J}}(s,\lambda a_0)\lsem \lambda a_1,\lambda a_2 \rsem + 
	  6\dk{1}{\tilde{\mathcal J}}(s,\lambda a_0) \lsem \lambda a_3 \rsem \\
	&\qquad = \dk{3}{\tilde{\mathcal J}}(s,\lambda b_0) \lsem \lambda b_1,\lambda b_1,\lambda b_1 \rsem + 
	6\dk{2}{\tilde{\mathcal J}}(s,\lambda b_0) \lsem \lambda b_1,\lambda b_2 \rsem + 
	6\dk{1}{\tilde{\mathcal J}}(s,\lambda b_0) \lsem \lambda b_3 \rsem \\
	&\qquad\qquad+3\partial_t\dk{2}{\tilde{\mathcal J}}(s,\lambda b_0) \lsem \lambda b_1,\lambda b_1 \rsem + 
	6\partial_t\dk{1}{\tilde{\mathcal J}}(s,\lambda b_0) \lsem \lambda b_2 \rsem \\
	&\qquad\qquad+3\partial_{tt} \dk{1}{\tilde{\mathcal J}}(s,\lambda b_0) \lsem \lambda b_1 \rsem +\partial_{ttt}\tilde{\mathcal J}(s,\lambda b_0).
	\end{align*}
	We calculate some of the terms separately:
	\begin{itemize}
		\item
		The third derivatives of $\tilde{\mathcal J}$ yield
		\begin{multline*}
		\dk{3}{\tilde{\mathcal J}}(s,\lambda a_0)\lsem \lambda a_1,\lambda a_1,\lambda a_1 \rsem
		-\dk{3}{\tilde{\mathcal J}}(s,\lambda b_0)\lsem \lambda b_1,\lambda b_1,\lambda b_1 \rsem \\
		= \frac14\lambda^3\left(\omega_3^3\dk{3}{\tilde{\mathcal J}}(s,\lambda\alpha v)\lsem \alpha v^\perp,\alpha v^\perp,\alpha v^\perp \rsem 
		+3\omega_3\dk{3}{\tilde{\mathcal J}}(s,\lambda\alpha v)\lsem \alpha v^\perp,(\alpha v)',(\alpha v)' \rsem\right).
		\end{multline*}
		\item
		For the second derivatives of $\tilde{\mathcal J}$, we find
		\begin{multline*}
		6\dk{2}{\tilde{\mathcal J}}(s,\lambda a_0)\lsem \lambda a_1,\lambda a_2 \rsem-6\dk{2}{\tilde{\mathcal J}}(s,\lambda b_0) \lsem \lambda b_1,\lambda b_2 \rsem \\
		=\lambda^2 \omega_3 \left(\dk{2}{\tilde{\mathcal J}}(s,\lambda\alpha v) \lsem \alpha v^\perp,(\alpha v)'' \rsem - 
		(\alpha^2+\omega_3^2)\dk{2}{\tilde{\mathcal J}}(s,\lambda\alpha v) \lsem \alpha v^\perp,\alpha v \rsem \right) \\
		+\frac12\lambda^2\omega_3\left(\omega_3\dk{2}{\tilde{\mathcal J}}(s,\lambda\alpha v) \lsem \alpha v^\perp,(\alpha v^\perp)'\rsem 
		- \omega_3'\dk{2}{\tilde{\mathcal J}}(s,\lambda\alpha v) \lsem \alpha v^\perp,\alpha v^\perp \rsem \right) \\
		+\frac32\lambda^2\left(\omega_3\dk{2}{\tilde{\mathcal J}}(s,\lambda\alpha v) \lsem (\alpha v)',(\alpha v^\perp)' \rsem 
		+\omega_3'\dk{2}{\tilde{\mathcal J}}(s,\lambda\alpha v) \lsem (\alpha v)',\alpha v^\perp \rsem \right).
		\end{multline*}
		\item
		For the first derivatives of $\tilde{\mathcal J}$, we get with 
		\begin{equation*} 
		  \begin{aligned}
		   \nu &:= -(\alpha^2+\omega_3^2)\omega_3\alpha v^\perp+2\omega_3\omega_3'\alpha v-2\omega_3^2(\alpha v)' \\
		       & \qquad -(\left<\alpha v,(\alpha v)'\right>+\omega_3\omega_3')\alpha v+2\omega_3(\alpha v^\perp)''\\
		       & \qquad + (\alpha^2+\omega_3^2)(\alpha v)'+3\omega_3'(\alpha v^\perp)'+2\omega_3''\alpha v^\perp,
		  \end{aligned}
                \end{equation*}
                that
                \begin{equation*}
		6\dk{1}{\tilde{\mathcal J}}(s,\lambda a_0)\lsem \lambda a_3 \rsem 
		-6\dk{1}{\tilde{\mathcal J}}(s,\lambda b_0)\lsem \lambda b_3 \rsem 
		=\frac12\lambda \dk{1}{\tilde{\mathcal J}}(s,\lambda\alpha v)
		  \lsem \nu \rsem.
		\end{equation*}
		\item
		On the other side, we find for the mixed derivatives of $\tilde{\mathcal J}$
		\begin{multline}\label{eqIdentity3}
		3\partial_t\dk{2}{\tilde{\mathcal J}}(s,\lambda b_0)\lsem\lambda b_1,\lambda b_1\rsem+6\partial_t\dk{1}{\tilde{\mathcal J}}(s,\lambda b_0)\lsem\lambda b_2\rsem+3\partial_{tt}\dk{1}{\tilde{\mathcal J}}(s,\lambda b_0)\lsem\lambda b_1\rsem+\partial_{ttt}\tilde{\mathcal J}(s,\lambda b_0) \\
		=\frac34\lambda^2\Big(\omega_3^2\partial_t\dk{2}{\tilde{\mathcal J}}(s,\lambda\alpha v)\lsem\alpha v^\perp,\alpha v^\perp\rsem\\
		-2\omega_3\partial_t\dk{2}{\tilde{\mathcal J}}(s,\lambda\alpha v)\lsem\alpha v^\perp,(\alpha v)'\rsem
		+\partial_t\dk{2}{\tilde{\mathcal J}}(s,\lambda\alpha v)\lsem(\alpha v)',(\alpha v)'\rsem\Big)\\
		+\lambda\partial_t\dk{1}{\tilde{\mathcal J}}(s,\lambda\alpha v)\lsem(\alpha v)''-2\omega_3'\alpha v^\perp-\omega_3(\alpha v^\perp)'-(\alpha^2+\omega_3^2)\alpha v\rsem \\
		+\frac32\lambda\partial_{tt}\dk{1}{\tilde{\mathcal J}}(s,\lambda\alpha v)\lsem(\alpha v)'-\omega_3\alpha v^\perp)+\partial_{ttt}\tilde{\mathcal J}(s,\lambda\alpha v\rsem.
		\end{multline}
	\end{itemize}
	
	The terms with the second order derivatives in $\alpha$ are given by
	\[ \lambda^2\omega_3\alpha\alpha''\dk{2}{\tilde{\mathcal J}}(s,\lambda\alpha v)\lsem v^\perp,v\rsem+\lambda\omega_3\alpha''\dk{1}{\tilde{\mathcal J}}(s,\lambda\alpha v)\lsem v^\perp\rsem-\lambda\alpha''\partial_t\dk{1}{\tilde{\mathcal J}}(s,\lambda\alpha v)\lsem v\rsem = 0 \]
	for $\alpha\ne0$, since, if we take the derivative of \autoref{eqRecSymmetryFirstOrder} with respect to $\lambda$, we find that
	\[ \alpha\partial_t\dk{1}{\tilde{\mathcal J}}(s,\lambda\alpha v)\lsem v\rsem=\lambda\omega_3\alpha^2\dk{2}{\tilde{\mathcal J}}(s,\lambda\alpha v)\lsem v^\perp,v\rsem+\omega_3\alpha \dk{1}{\tilde{\mathcal J}}(s,\lambda\alpha v)\lsem v^\perp\rsem. \]
	
	Taking the time derivative of \autoref{eqRecSymmetryFirstOrder}, we arrive at
	\begin{multline*}
	\partial_{tt}\tilde{\mathcal J}(s,\lambda\alpha v)+\lambda\partial_t\dk{1}{\tilde{\mathcal J}}(s,\lambda\alpha v)\lsem (\alpha v)'\rsem \\
	=\lambda\omega_3'\dk{1}{\tilde{\mathcal J}}(s,\lambda\alpha v)\lsem\alpha v^\perp\rsem+\lambda\omega_3\partial_t\dk{1}{\tilde{\mathcal J}}(s,\lambda\alpha v)\lsem\alpha v^\perp\rsem \\
	+\lambda\omega_3\dk{1}{\tilde{\mathcal J}}(s,\lambda\alpha v)\lsem(\alpha v^\perp)'\rsem+\lambda^2\omega_3\dk{2}{\tilde{\mathcal J}}(s,\lambda\alpha v)\lsem\alpha v^\perp,(\alpha v)'\rsem.
	\end{multline*}
	Another derivative with respect to time gives us
	\begin{multline}\label{eqV2}
	\partial_{ttt}\tilde{\mathcal J}(s,\lambda\alpha v)+2\lambda\partial_{tt}\dk{1}{\tilde{\mathcal J}}(s,\lambda\alpha v)\lsem(\alpha v)'\rsem+\lambda^2\partial_t\dk{2}{\tilde{\mathcal J}}(s,\lambda\alpha v)\lsem(\alpha v)',(\alpha v)'\rsem+\lambda\partial_t\dk{1}{\tilde{\mathcal J}}(s,\lambda\alpha v)\lsem(\alpha v)''\rsem \\
	=\lambda\omega_3''\dk{1}{\tilde{\mathcal J}}(s,\lambda\alpha v)\lsem\alpha v^\perp\rsem+2\lambda\omega_3'\partial_t\dk{1}{\tilde{\mathcal J}}(s,\lambda\alpha v)\lsem\alpha v^\perp\rsem+2\lambda^2\omega_3'\dk{2}{\tilde{\mathcal J}}(s,\lambda\alpha v)\lsem\alpha v^\perp,(\alpha v)'\rsem \\
	+2\lambda\omega_3'\dk{1}{\tilde{\mathcal J}}(s,\lambda\alpha v)\lsem(\alpha v^\perp)'\rsem+\lambda\omega_3\partial_{tt}\dk{1}{\tilde{\mathcal J}}(s,\lambda\alpha v)\lsem\alpha v^\perp\rsem+2\lambda^2\omega_3\partial_t\dk{2}{\tilde{\mathcal J}}(s,\lambda\alpha v)\lsem\alpha v^\perp,(\alpha v)'\rsem \\
	+2\lambda\omega_3\partial_t\dk{1}{\tilde{\mathcal J}}(s,\lambda\alpha v)\lsem(\alpha v^\perp)'\rsem+2\lambda^2\omega_3\dk{2}{\tilde{\mathcal J}}(s,\lambda\alpha v)\lsem(\alpha v^\perp)',(\alpha v)'\rsem+\lambda\omega_3\dk{1}{\tilde{\mathcal J}}(s,\lambda\alpha v)\lsem(\alpha v^\perp)''\rsem\\
	+\lambda^2\omega_3\dk{2}{\tilde{\mathcal J}}(s,\lambda\alpha v)\lsem\alpha v^\perp,(\alpha v)''\rsem+\lambda^3\omega_3\dk{3}{\tilde{\mathcal J}}(s,\lambda\alpha v)\lsem \alpha v^\perp,(\alpha v)',(\alpha v)' \rsem.
	\end{multline}
	
	Subtracting the identity (by our choice of the functions $v$ and $\omega_3$) of \autoref{eqV2} from \autoref{eqIdentity3}, we find, by putting everything together, the condition
	\begin{multline*}
	\frac14\lambda^3\left(\omega_3^3\dk{3}{\tilde{\mathcal J}}(s,\lambda\alpha v)\lsem \alpha v^\perp,\alpha v^\perp,\alpha v^\perp \rsem-\omega_3\dk{3}{\tilde{\mathcal J}}(s,\lambda\alpha v)\lsem \alpha v^\perp,(\alpha v)',(\alpha v)' \rsem\right) \\
	-\lambda^2\omega_3(\alpha^2+\omega_3^2)\dk{2}{\tilde{\mathcal J}}(s,\lambda\alpha v)\lsem\alpha v^\perp,\alpha v\rsem \\
	+\frac12\lambda^2\omega_3\left(\omega_3\dk{2}{\tilde{\mathcal J}}(s,\lambda\alpha v)\lsem\alpha v^\perp,(\alpha v^\perp)'\rsem-\omega_3'\dk{2}{\tilde{\mathcal J}}(s,\lambda\alpha v)\lsem\alpha v^\perp,\alpha v^\perp\rsem\right) \\
	-\frac12\lambda^2\left(\omega_3\dk{2}{\tilde{\mathcal J}}(s,\lambda\alpha v)\lsem(\alpha v)',(\alpha v^\perp)'\rsem+\omega_3'\dk{2}{\tilde{\mathcal J}}(s,\lambda\alpha v)\lsem(\alpha v)',\alpha v^\perp\rsem\right) \\
	+\frac12\lambda \dk{1}{\tilde{\mathcal J}}(s,\lambda\alpha v)\lsem-(\alpha^2+\omega_3^2)\omega_3\alpha v^\perp-(\left<\alpha v,(\alpha v)'\right>-\omega_3\omega_3')\alpha v+(\alpha^2-\omega_3^2)(\alpha v)'-\omega_3'(\alpha v^\perp)'\rsem \\
	=\lambda^2\Big(\frac34\omega_3^2\partial_t\dk{2}{\tilde{\mathcal J}}(s,\lambda\alpha v)\lsem\alpha v^\perp,\alpha v^\perp\rsem\\
	+\frac12\omega_3\partial_t\dk{2}{\tilde{\mathcal J}}(s,\lambda\alpha v)\lsem\alpha v^\perp,(\alpha v)'\rsem
	-\frac14\partial_t\dk{2}{\tilde{\mathcal J}}(s,\lambda\alpha v)\lsem(\alpha v)',(\alpha v)'\rsem\Big) \\
	+\lambda\partial_t\dk{1}{\tilde{\mathcal J}}(s,\lambda\alpha v)\lsem\omega_3(\alpha v^\perp)'-(\alpha^2+\omega_3^2)\alpha v\rsem
	-\frac12\lambda\partial_{tt}\dk{1}{\tilde{\mathcal J}}(s,\lambda\alpha v)\lsem(\alpha v)'+\omega_3\alpha v^\perp\rsem.
	\end{multline*}
	
	We sort the terms with respect to the derivative $\alpha'$ and write the equation in the form
	\begin{equation}\label{eqODE3}
	A_0(\lambda\alpha)+\alpha^2A_{02}(\lambda\alpha)+A_1(\lambda\alpha)\lambda\alpha'+A_{12}(\lambda\alpha)(\lambda\alpha')^2 = 0.
	\end{equation}
	\begin{itemize}
		\item
		The coefficient $A_{12}$ of $(\alpha')^2$ is given by
		\begin{equation}\label{eqA12}
		A_{12}(\mu) = -\frac14\mu\omega_3\dk{3}{\tilde{\mathcal J}}(s,\mu v)\lsem v^\perp,v,v\rsem-\frac12\omega_3\dk{2}{\tilde{\mathcal J}}(s,\mu v)\lsem v,v^\perp\rsem+\frac14\partial_t\dk{2}{\tilde{\mathcal J}}(s,\mu v)\lsem v,v\rsem.
		\end{equation}
		We take the derivative of \autoref{eqRecSymmetryFirstOrder} with respect to $\lambda$, giving us
		\begin{equation}\label{eqVlambda1}
		\alpha\partial_t\dk{1}{\tilde{\mathcal J}}(s,\lambda\alpha v)\lsem v\rsem=\lambda\omega_3\alpha^2\dk{2}{\tilde{\mathcal J}}(s,\lambda\alpha v)\lsem v^\perp,v\rsem+\omega_3\alpha \dk{1}{\tilde{\mathcal J}}(s,\lambda\alpha v)\lsem v^\perp\rsem,
		\end{equation}
		and taking another derivative with respect to $\lambda$, we find the identity
		\[ \alpha^2\partial_t\dk{2}{\tilde{\mathcal J}}(s,\lambda\alpha v)\lsem v,v\rsem = 2\omega_3\alpha^2\dk{2}{\tilde{\mathcal J}}(s,\lambda\alpha v)\lsem v^\perp,v\rsem+\lambda\omega_3\alpha^3\dk{3}{\tilde{\mathcal J}}(s,\lambda\alpha v)\lsem v^\perp,v,v\rsem. \]
		Comparing this with \autoref{eqA12}, we see that for $\alpha\ne0$:
		\[ A_{12} = 0. \]
		
		\item
		The coefficient $A_1$ of $\alpha'$ is
		\begin{multline*}
		A_1(\mu) = -\frac12\mu^2\omega_3\dk{3}{\tilde{\mathcal J}}(s,\mu v)\lsem v^\perp,v,v'\rsem+\frac12\mu\omega_3^2\dk{2}{\tilde{\mathcal J}}(s,\mu v)\lsem v^\perp,v^\perp\rsem \\
		-\frac12\mu\omega_3\dk{2}{\tilde{\mathcal J}}(s,\mu v)\lsem v',v^\perp\rsem-\frac12\mu\omega_3\dk{2}{\tilde{\mathcal J}}(s,\mu v)\lsem v,(v^\perp)'\rsem-\frac12\mu\omega_3'\dk{2}{\tilde{\mathcal J}}(s,\mu v)\lsem v,v^\perp\rsem \\
		-\frac12\dk{1}{\tilde{\mathcal J}}(s,\mu v)\lsem\omega_3^2v+\omega_3'v^\perp\rsem-\frac12\mu\omega_3\partial_t\dk{2}{\tilde{\mathcal J}}(s,\mu v)\lsem v^\perp,v\rsem+\frac12\mu\partial_t\dk{2}{\tilde{\mathcal J}}(s,\mu v)\lsem v,v'\rsem\\
		-\omega_3\partial_t\dk{1}{\tilde{\mathcal J}}(s,\mu v)\lsem v^\perp\rsem+\frac12\partial_{tt}\dk{1}{\tilde{\mathcal J}}(s,\mu v)\lsem v\rsem
		\end{multline*}
		
		If we compare this with the time derivative of the derivative of \autoref{eqRecSymmetryFirstOrder} with respect to $\mu$ with $\lambda=\frac\mu\alpha$, that is, the time derivative of
		\begin{equation}\label{eqVmu}
		\partial_t\dk{1}{\tilde{\mathcal J}}(s,\mu v)\lsem v\rsem=\mu\omega_3\dk{2}{\tilde{\mathcal J}}(s,\mu v)\lsem v^\perp,v\rsem+\omega_3\dk{1}{\tilde{\mathcal J}}(s,\mu v)\lsem v^\perp\rsem,
		\end{equation}
		which is
		\begin{multline*}
		\partial_{tt}\dk{1}{\tilde{\mathcal J}}(s,\mu v)\lsem v\rsem+\mu\partial_t\dk{2}{\tilde{\mathcal J}}(s,\mu v)\lsem v,v'\rsem+\partial_t\dk{1}{\tilde{\mathcal J}}(s,\mu v)\lsem v'\rsem \\
		= \mu\omega_3'\dk{2}{\tilde{\mathcal J}}(s,\mu v)\lsem v^\perp,v\rsem+\mu^2\omega_3\dk{3}{\tilde{\mathcal J}}(s,\mu v)\lsem v^\perp,v,v'\rsem+\mu\omega_3\dk{2}{\tilde{\mathcal J}}(s,\mu v)\lsem(v^\perp)',v\rsem \\
		+\omega_3'\dk{1}{\tilde{\mathcal J}}(s,\mu v)\lsem v^\perp\rsem+2\mu\omega_3\dk{2}{\tilde{\mathcal J}}(s,\mu v)\lsem v^\perp,v'\rsem+\omega_3\dk{1}{\tilde{\mathcal J}}(s,\mu v)\lsem(v^\perp)'\rsem \\
		+\mu\omega_3\partial_t\dk{2}{\tilde{\mathcal J}}(s,\mu v)\lsem v^\perp,v\rsem+\omega_3\partial_t\dk{1}{\tilde{\mathcal J}}(s,\mu v)\lsem v^\perp\rsem;
		\end{multline*}
		then we see that some of the coefficients cancel each other and we are left with
		\begin{multline*}
		A_1(\mu) = \frac12\mu\omega_3^2\dk{2}{\tilde{\mathcal J}}(s,\mu v)\lsem v^\perp,v^\perp\rsem+\frac12\mu\omega_3\dk{2}{\tilde{\mathcal J}}(s,\mu v)\lsem v',v^\perp\rsem-\frac12\omega_3^2\dk{1}{\tilde{\mathcal J}}(s,\mu v)\lsem v\rsem \\
		-\frac12\omega_3\partial_t\dk{1}{\tilde{\mathcal J}}(s,\mu v)\lsem v^\perp\rsem-\frac12\partial_t\dk{1}{\tilde{\mathcal J}}(s,\mu v)\lsem v'\rsem+\frac12\omega_3\dk{1}{\tilde{\mathcal J}}(s,\mu v)\lsem(v^\perp)'\rsem.
		\end{multline*}
		We rewrite it in the form
		\begin{multline*}
		A_1(\mu) = \frac12\Big[\mu\omega_3\dk{2}{\tilde{\mathcal J}}(s,\mu v)\lsem v^\perp,\omega_3v^\perp+v'\rsem \\
		+\omega_3\dk{1}{\tilde{\mathcal J}}(s,\mu v)\lsem (v^\perp)'-\omega_3v\rsem-\partial_t\dk{1}{\tilde{\mathcal J}}(s,\mu v)\lsem\omega_3v^\perp+v'\rsem\Big].
		\end{multline*} 
		Remarking that $|v|^2=1$ and thus $\left<v,v'\right>=0$, we have a scalar function $\sigma$ such that
		\[ v' = \sigma v^\perp. \]
		With this, $A_1$ becomes (using $(v^\perp)' = (v')^\perp = -\sigma v$)
		\[ A_1(\mu) = \frac12(\omega_3+\sigma)\Big[\mu\omega_3\dk{2}{\tilde{\mathcal J}}(s,\mu v)\lsem v^\perp,v^\perp\rsem-\omega_3\dk{1}{\tilde{\mathcal J}}(s,\mu v)\lsem v \rsem-\partial_t\dk{1}{\tilde{\mathcal J}}(s,\mu v)\lsem v^\perp\rsem\Big]. \]	
		Thus, $A_1(\mu)$ is zero if either $\sigma=-\omega_3$, that is $v'=-\omega_3v^\perp$, in which case the bracket can be written as
		\[ -\frac{\d}{\d s} \dk{1}{\tilde{\mathcal J}}(s,\mu v)\lsem v^\perp\rsem = -\partial_t\dk{1}{\tilde{\mathcal J}}(s,\mu v)\lsem v^\perp\rsem-\mu\sigma \dk{2}{\tilde{\mathcal J}}(s,\mu v)\lsem v^\perp,v^\perp\rsem+\sigma \dk{1}{\tilde{\mathcal J}}(s,\mu v)\lsem v\rsem; \]
		or if the bracket vanishes, that is,
		\[ (\omega_3-\sigma)\partial_t\dk{1}{\tilde{\mathcal J}}(s,\mu v)\lsem v^\perp\rsem+\omega_3\frac{\d}{\d s} \dk{1}{\tilde{\mathcal J}}(s,\mu v)\lsem v^\perp\rsem = 0. \]
		
		Taking the time derivative of \autoref{eqRecSymmetryFirstOrder} with $\lambda=\frac\mu\alpha$, we get
		\begin{multline*}
		\partial_{tt}\tilde{\mathcal J}(s,\mu v)+\mu\partial_t\dk{1}{\tilde{\mathcal J}}(s,\mu v)\lsem v' \rsem = 
		\mu\omega_3'\dk{1}{\tilde{\mathcal J}}(s,\mu v)\lsem v^\perp\rsem +\mu\omega_3\partial_t\dk{1}{\tilde{\mathcal J}}(s,\mu v)\lsem v^\perp\rsem\\
		+\mu^2\omega_3\dk{2}{\tilde{\mathcal J}}(s,\mu v)\lsem v^\perp,v'\rsem+\mu\omega_3\dk{1}{\tilde{\mathcal J}}(s,\mu v)\lsem(v^\perp)'\rsem.
		\end{multline*}
		
		\item
		The coefficient $A_{02}$ of $\alpha^2$ consists of the terms
		\begin{multline*}
		A_{02}(\mu) = -\mu^2\omega_3\dk{2}{\tilde{\mathcal J}}(s,\mu v)\lsem v^\perp,v\rsem-\frac12\mu\omega_3\dk{1}{\tilde{\mathcal J}}(s,\mu v)\lsem v^\perp\rsem\\
		+\frac12\mu \dk{1}{\tilde{\mathcal J}}(s,\mu v)\lsem v'\rsem+\mu\partial_t\dk{1}{\tilde{\mathcal J}}(s,\mu v)\lsem v\rsem.
		\end{multline*} 
		With \autoref{eqVlambda1} this becomes
		\[ A_{02}(\mu) = \frac12\mu \dk{1}{\tilde{\mathcal J}}(s,\mu v)\lsem \omega_3v^\perp+v'\rsem = \frac12\mu(\omega_3+\sigma)\dk{1}{\tilde{\mathcal J}}(s,\mu v)\lsem v^\perp\rsem, \]
		which again vanishes for $\sigma=-\omega_3$.
		
		\item
		Finally, the coefficient $A_0$ is given by
		\begin{multline*}
		A_0(\mu) = \frac14\mu^3\omega_3^3\dk{3}{\tilde{\mathcal J}}(s,\mu v)\lsem v^\perp,v^\perp,v^\perp\rsem-\frac14\mu^3\omega_3\dk{3}{\tilde{\mathcal J}}(s,\mu v)\lsem v^\perp,v',v'\rsem\\
		-\mu^2\omega_3^3\dk{2}{\tilde{\mathcal J}}(s,\mu v)\lsem v^\perp,v\rsem 
		+\frac12\mu^2\omega_3\dk{2}{\tilde{\mathcal J}}(s,\mu v)\lsem\omega_3v^\perp-v',(v^\perp)'\rsem\\
		-\frac12\mu^2\omega_3'\dk{2}{\tilde{\mathcal J}}(s,\mu v)\lsem \omega_3v^\perp+v',v^\perp\rsem 
		-\frac12\mu\omega_3^2\dk{1}{\tilde{\mathcal J}}(s,\mu v)\lsem\omega_3v^\perp+v'\rsem\\
		+\frac12\mu\omega_3'\dk{1}{\tilde{\mathcal J}}(s,\mu v)\lsem\omega_3v-(v^\perp)'\rsem 
		-\frac34\mu^2\omega_3^2\partial_t\dk{2}{\tilde{\mathcal J}}(s,\mu v)\lsem v^\perp,v^\perp\rsem\\
		+\frac14\mu^2\partial_t\dk{2}{\tilde{\mathcal J}}(s,\mu v)\lsem v'-2\omega_3v^\perp,v'\rsem 
		-\mu\omega_3\partial_t\dk{1}{\tilde{\mathcal J}}(s,\mu v)\lsem (v^\perp)'-\omega_3v\rsem\\
		+\frac12\mu\partial_{tt}\dk{1}{\tilde{\mathcal J}}(s,\mu v)\lsem v'+\omega_3 v^\perp\rsem.
		\end{multline*}
		Writing again $v'=\sigma v^\perp$, this becomes
		\begin{multline*}
		A_0(\mu) = \frac14\mu^3\omega_3(\omega_3^2-\sigma^2)\dk{3}{\tilde{\mathcal J}}(s,\mu v)\lsem v^\perp,v^\perp,v^\perp\rsem \\
		-\frac12\mu^2\omega_3(2\omega_3^2+\omega_3\sigma-\sigma^2)\dk{2}{\tilde{\mathcal J}}(s,\mu v)\lsem v^\perp,v\rsem-\frac12\mu^2(\omega_3+\sigma)\omega_3'\dk{2}{\tilde{\mathcal J}}(s,\mu v)\lsem v^\perp,v^\perp\rsem\\
		-\frac12\mu\omega_3^2(\omega_3+\sigma)\dk{1}{\tilde{\mathcal J}}(s,\mu v)\lsem v^\perp\rsem+\frac12\mu(\omega_3+\sigma)\omega_3'\dk{1}{\tilde{\mathcal J}}(s,\mu v)\lsem v\rsem \\
		-\frac14\mu^2(3\omega_3^2+2\omega_3\sigma-\sigma^2)\partial_t\dk{2}{\tilde{\mathcal J}}(s,\mu v)\lsem v^\perp,v^\perp\rsem \\
		+\mu\omega_3(\omega_3+\sigma)\partial_t\dk{1}{\tilde{\mathcal J}}(s,\mu v)\lsem v\rsem+\frac12\mu(\omega_3+\sigma)\partial_{tt}\dk{1}{\tilde{\mathcal J}}(s,\mu v)\lsem v^\perp\rsem.
		\end{multline*}
		Subtracting \autoref{eqVmu}, we get rid of one term:
		\begin{multline*}
		A_0(\mu) = \frac14\mu(\omega_3+\sigma)\Big[\mu^2\omega_3(\omega_3-\sigma)\dk{3}{\tilde{\mathcal J}}(s,\mu v)\lsem v^\perp,v^\perp,v^\perp\rsem \\
		+2\mu \dk{2}{\tilde{\mathcal J}}(s,\mu v)\lsem v^\perp,\omega_3\sigma v-\omega_3'v^\perp\rsem+2\dk{1}{\tilde{\mathcal J}}(s,\mu v)\lsem \omega_3^2v^\perp+\omega_3'v\rsem \\
		-\mu(3\omega_3-\sigma)\partial_t\dk{2}{\tilde{\mathcal J}}(s,\mu v)\lsem v^\perp,v^\perp\rsem+2\partial_{tt}\dk{1}{\tilde{\mathcal J}}(s,\mu v)\lsem v^\perp\rsem\Big].
		\end{multline*}
	\end{itemize}
\end{proof}

%%%%%%%%%%%%%%%%%%%%%%%%%%%%%%
%%% References
%%%%%%%%%%%%%%%%%%%%%%%%%%%%%%
\section*{References}
\renewcommand{\i}{\ii}
\printbibliography[heading=none]
%%% End

\end{document}